\newtheorem{lemma}{Lemma}[section]
\newtheorem{remark}{Remark}[section]
\newtheorem{theorem}{Theorem}[section]
\newtheorem{definition}{Definition}[section]
\author{Yong Lin$^{0}$}
\address{Yong Lin: YMSC, Tsinghua University, Beijing 100084, China}
\email{yonglin@tsinghua.edu.cn}
\author{Shi Wan}
\address{Shi Wan: YMSC, Tsinghua University, Beijing 100084, China}
\email{wans21@mails.tsinghua.edu.cn}
\begin{document}
	\title{Connection Heat Kernel on Connection Lattices and Connection Discrete Torus}
	\date{}
	\begin{abstract}
		By the connection graph we mean an underlying weighted graph with a connection which associates edge set with an orthogonal group. This paper centers its investigation on the connection heat kernels on connection lattices and connection discrete torus. For one dimensional connection lattice, we derive the connection heat kernel expression by doing the Taylor expansion on the exponential function involving normalized connection Laplacian. We introduce a novel connection called product connection and prove that the connection heat kernel on arbitrary high dimensional lattice with product connection equals the Kronecker sum of one dimensional connection lattices' connection heat kernels. Furthermore, if the connection graph is consistent, we substantiate the interrelation between its connection heat kernel and its underlying graph's heat kernel. We define a connection called quotient connection such that discrete torus with quotient connection can be considered as a quotient graph of connection lattice, whose connection heat kernel is demonstrated to be the sum of connection lattices' connection heat kernels. In addition, we derive an alternative expression of connection heat kernel on discrete torus whenever its quotient connection is a constant connection, yielding an equation as a connection graph's trace formula.  	 
	\end{abstract}
		\footnotetext[0]{supported by the National Science Foundation of China(Grant No.12071245)}
	\maketitle

	\section{Introduction}
	
	The heat kernel is the fundamental solution to the heat equation, a partial differential equation that describes how the distribution of heat in a given medium changes over time. It characterizes the behavior of heat propagation, indicating how heat distributes itself across the space as time progresses. As an analytical instrument, heat kernel is useful in delineating certain function spaces\cite{butzer2013semi}, the estimation of whose bounds holds significance across various domains\cite{duong2005duality,jiang2016heat,morris2005evolving}.  On a Riemannian manifold, the heat kernel is typically defined as an exponential function involving the Laplace-Beltrami operator. This operator captures the intrinsic curvature and geometry of the manifold\cite{grigor2006heat,jones2008manifold}. 
	
	The notion of heat kernel can be naturally introduced into graphs with or without connection. It can be defined as either the solution of the (connection) heat equation on graphs or an exponential function involving the graph's (connection) Laplacian operator. For lattices without connection of arbitrary dimensions, F.R.K. Chung et al. not only provided comprehensive elucidations regarding the formulations of the heat kernel and the estimation of its bounds but also derived several hypergeometric equalities utilizing the heat kernel's trace formula in \cite{chung1997combinatorial}. For discrete torus without connection, Alexander Grigor’yan et al. derived the expression of its heat kernel and established equalities concerning trigonometric sums in \cite{grigor2022discrete} as well as Gautam Chinta et al. proved the asymptotic behavior of some spectral invariants through studying the degenerating families of discrete torus in \cite{chinta2010zeta}.
	
	Section 2 includes the mathematical preliminaries that may be needed in later sections. In Section 3 we derive the expressions of connection heat kernel on connection lattices $(\mathbb{Z}^n,\hat{\sigma})$ for all $n\ge 0$, where $\hat{\sigma}$ is a novel connection called product connection. We investigate the property of connection heat kernel on any consistent graph in section 4. Section 5 and 6 involve the derivation for the expression of connection heat kernel on connection discrete torus. We end with introducing two applications of connection heat kernel in section 7.
	
	The main results of our paper are as follows:
	\begin{itemize}
		\item The expression of connection heat kernel on $(\mathbb{Z}^n,\hat{\sigma})$:
		\begin{equation*}
			\begin{aligned}
				&H_t^{\mathbb{Z}^n,\hat{\sigma}}\left(\left(x_1,x_2,\cdots,x_n\right),\left(x_1+a_1,x_2+a_2,\cdots,x_n+a_n\right)\right)\\
				&=\prod_{i=1}^{n}\left((-1)^{|a_i|} \sum_{k\ge 0}\frac{C_{2k}^{k+|a_i|}}{k!} (-\frac{t}{2n})^k\right) \sigma^{(1)}_{P_{x_1\rightarrow x_1+a_1}}\otimes \cdots\otimes \sigma^{(n)}_{P_{x_n\rightarrow x_n+a_n}}
			\end{aligned}
		\end{equation*}
		\item The correlation between the connection heat kernel $H_t^{\sigma}$ on any consistent graph $(\Gamma,\sigma)$ and the heat kernel $H_t$ on its underlying graph $\Gamma$:
		\[ H_t^{\sigma}(x,y)=H_t(x,y)\sigma_{P_{x\rightarrow y}} \] 
		\item The expression of connection heat kernel on  $(\mathbb{Z}^n/M\mathbb{Z}^n,\hat{\sigma}^{Q_{M\mathbb{Z}^n}})$ is:
		\[ 
		\begin{aligned}
			&H_t^{\hat{\sigma}^{Q_{M\mathbb{Z}^n}}}([x],[y])\\
			&=\sum_{a\in M\mathbb{Z}^n}\prod_{i=1}^{n}\left((-1)^{|y_i+a_i-x_i|} \sum_{k\ge 0}\frac{C_{2k}^{k+|y_i+a_i-x_i|}}{k!} (-\frac{t}{2n})^k\right)\\ &\sigma^{(1)}_{P_{x_1\rightarrow y_1+a_1}}\otimes \sigma^{(2)}_{P_{x_2\rightarrow y_2+a_2}}\otimes\cdots\otimes \sigma^{(n)}_{P_{x_n\rightarrow y_n+a_n}}
		\end{aligned}
		\]
		\item A matrix equation based on connection heat kernel on $(\mathbb{Z}^n/M\mathbb{Z}^n,\hat{\sigma}^{Q_{M\mathbb{Z}^n}})$:
		\begin{tiny}
			\begin{equation*}
				\begin{aligned}
					&\sum_{a\in M\mathbb{Z}^n}\prod_{i=1}^{n}\left((-1)^{|y_i+a_i-x_i|} \sum_{k\ge 0}\frac{C_{2k}^{k+|y_i+a_i-x_i|}}{k!} (-\frac{t}{2n})^k\right)
					\sigma^{y_1+a_1-x_1}_1\otimes\cdots\otimes \sigma^{y_n+a_n-x_n}_n\\
					&=\frac{1}{det M}\sum_{w\in(M^*)^{-1}\mathbb{Z}^n /\mathbb{Z}^n}e^{-t}e^{2\pi i\langle w,x-y\rangle}e^{\frac{t}{n}cos(2\pi w_1)\sigma_1\oplus\frac{t}{n}cos(2\pi w_2)\sigma_2\oplus\cdots\oplus\frac{t}{n}cos(2\pi w_n)\sigma_n }
				\end{aligned}
			\end{equation*} 
		\end{tiny}
		
	\end{itemize}
	
	\section{Preliminary}
	\subsection{Graph's Standard Laplacian and Connection Laplacian}
	Suppose $\Gamma=(V,E,w)$ is an undirected weighted graph where $V$ is the vertex set, $E$ is the edge set and all edge weight $w_{uv}=w_{vu}>0$ if and only if $(u,v)\in E$. The degree of a vertex $v $ is defined as $d(v):=\sum_{u\sim v}w_{vu}$. The degree matrix $D$ of $\Gamma$ is a diagonal matrix consisting of the degrees of all vertices. The adjacency matrix $A$ of $\Gamma$ is defined by :
	
	$$A(u,v)=\left\{ 	\begin{aligned}
		w_{uv}  \quad &if (u,v)\in E \\
		0 \quad&else 
	\end{aligned}\right.$$
	The standard Laplacian of $\Gamma$ is defined as $L:=D-A$ and the normalized standard Laplacian is defined as $\mathcal{L}:=I-D^{-\frac{1}{2}}AD^{-\frac{1}{2}}=D^{-\frac{1}{2}}LD^{-\frac{1}{2}}$. Let $C(\Gamma,\mathbb{R})$ denote the space of functions $f:V(\Gamma)\rightarrow \mathbb{R}$. The standard Laplacian is an operator on $C(\Gamma,\mathbb{R})$ and the action is 
	\[ Lf(u)=\sum_{v\sim u}w_{uv}(f(u)-f(v)) \]
	
	The connection of $\Gamma$ is a map from the set of all directed edges to orthogonal linear transformations, which assigns an orthogonal matrix $\sigma_{uv}$ to every directed edge $(u,v)$ satisfying $\sigma_{uv}=(\sigma_{vu})^{-1}$. That is, $\sigma:E\rightarrow O(d)$ satisfies $\sigma_{uv}\sigma_{vu}=I_{d\times d}$.
	We call $(\Gamma,\sigma)=(V,E,w,\sigma)$ connection graph which has $\Gamma=(V,E,w)$ as underlying graph. If orthogonal transformation acts on d-dimensional space, we say connection $\sigma$ is d-dimensional. The connection degree matrix of $(\Gamma,\sigma) $ is a block-diagonal matrix $D^{\sigma}$ with diagonal block $D^{\sigma}(u,u)=d(u)I_{d\times d}$. The connection adjacency matrix of $(\Gamma,\sigma)$  is defined by:
	$$A^{\sigma}(u,v)=\left\{ 	\begin{aligned}
		w_{uv}\sigma_{uv}  \quad &if (u,v)\in E \\
		0_{d\times d} \quad&else 
	\end{aligned}\right.$$
	The connection Laplacian of $(\Gamma,\sigma)$ is defined as $L^{\sigma}:=D^{\sigma}-A^{\sigma}$ and the normalized connection Laplacian $\mathcal{L}^{\sigma}$ is defined as $\mathcal{L}^{\sigma}:=I-(D^{\sigma})^{-\frac{1}{2}}A^{\sigma}(D^{\sigma})^{-\frac{1}{2}}=(D^{\sigma})^{-\frac{1}{2}}L(D^{\sigma})^{-\frac{1}{2}}$.
	Let $C((\Gamma,\sigma),\mathbb{R}^d)$ denote the space of functions $f:V(\Gamma)\rightarrow \mathbb{R}^d$. The connection Laplacian is an operator on $C((\Gamma,\sigma),\mathbb{R}^d)$ and the action is 
	\[ L^{\sigma}f(u)=\sum_{v\sim u}w_{uv}(f(u)-\sigma_{uv}f(v)) \]
	According to Courant-Fischer Theorem, we can study the eigenvalues of normalized connection Laplacian $\mathcal{L}^{\sigma}$ by examining its Rayleigh quotient
	\[ \mathcal{R}(f)=\frac{f^T\mathcal{L}^{\sigma}f}{f^Tf} \]
	where $f\in C((\Gamma,\sigma),\mathbb{R}^d)$ is regarded as a vector in $\mathbb{R}^{d|V|}$.
	Let $g=(D^{\sigma})^{-\frac{1}{2}}f$, then
	\begin{align*}
		\mathcal{R}(f)&=\frac{g^TL^{\sigma}g}{g^TD^{\sigma}g}\\
		&=\frac{\sum_{u\sim v}w_{uv}||g(u)-\sigma_{uv}g(v)||^2}{\sum_{v\in V} d_v||g(v)||^2}\\
		&\le \frac{2\sum_{u\sim v}w_{uv}\left(||g(u)||^2+||\sigma_{uv}g(v)||^2\right)}{\sum_{v\in V} d_v||g(v)||^2}\\
		&=\frac{\sum_{u,v}w_{uv}||g(u)||^2+\sum_{u, v}w_{uv}||g(v)||^2}{\sum_{v\in V} d_v||g(v)||^2}\\
		&=\frac{\sum_{u}d_{u}||g(u)||^2+\sum_{v}d_{v}||g(v)||^2}{\sum_{v\in V} d_v||g(v)||^2}\\
		&=2
	\end{align*}
	Therefore, all eigenvalues of $\mathcal{L}^{\sigma}$ are contained in $[0,2]$ and $\mathcal{L}^{\sigma}$ is a bounded operator on $C((\Gamma,\sigma),\mathbb{R}^d) $.
	\subsection{Graph's Heat Kernel and Connection Heat Kernel}
	Consider the heat equation on graph $\Gamma$ without connection:
	\[ \left\{
	\begin{aligned}
		(\frac{\partial}{\partial t}+\mathcal{L})f(t,x)&=0\\
		f(0,x)&=\delta_y(x)
	\end{aligned}
	\right.
	\]
	where $\delta_y$ is the characteristic function for the vertex $y$. The heat kernel of $\Gamma$ is the solution of the above equation, denoted by $k(t,x)$. For $\forall f\in C(\Gamma,\mathbb{R})$, $(\frac{\partial}{\partial t}+\mathcal{L})e^{-t\mathcal{L}}f=-\mathcal{L}e^{-t\mathcal{L}}f+\mathcal{L}e^{-t\mathcal{L}}f=0 $. Then $k(t,x)=e^{-t\mathcal{L}}\delta_y(x)$.
	
	Assume $\Gamma$ is finite and the number of vertices is $n$. It's known that standard Laplacian and normalized standard Laplacian of $\Gamma$ is symmetric positive semi-definite with real eigenvalues. Suppose $0=\lambda_0\le \lambda_1\le \cdots \le \lambda_{n-1}$ are the eigenvalues of $\mathcal{L}$ and corresponding orthonormal eigenfunctions are $\left\{\phi_i\right\}_{i=0}^{n-1}$.  
	\[ \delta_{y}(x)=\sum_{i=0}^{n-1}\langle \phi_i,\delta_y\rangle \phi_i(x)=\sum_{i=0}^{n-1}\overline{\phi_i(y)}\phi_i(x) \]
	Then the characteristic representation of heat kernel is
	\[ k(t,x)=e^{-t\mathcal{L}}\delta_{y}(x)=\sum_{i=0}^{n-1} e^{-t\mathcal{L}}\phi_i(x)\overline{\phi_i(y)}=\sum_{i=0}^{n-1} e^{-t\lambda_i}\phi_i(x)\overline{\phi_i(y)}\]
	We denote the operator $e^{-t\mathcal{L}}$ by $H_t$. Then the heat kernel  $k(t,x)=H_t\delta_{y}(x)=H_t(x,y)$. Therefore, we only need to specify the entry of $H_t$. In this paper, we refer to $H_t$ as heat kernel.
	
	The connection heat kernel of connection graph $(\Gamma,\sigma)$ can be defined analogously. We denote the operator $e^{-t\mathcal{L}^{\sigma}}$ by $H_t^{\sigma}$ and call $H_t^{\sigma} $ connection heat kernel of $(\Gamma,\sigma)$. Note that $H_t^{\sigma}(x,y)$ is $d\times d$ matrix if $\sigma$ is d-dimensional. Assume the underlying graph $\Gamma$ is finite and $|V|=n$. For $g \in C((\Gamma,\sigma),\mathbb{R}^d)$, we have $g^TL^{\sigma}g=\sum_{u\sim v}w_{uv}||g(u)-\sigma_{uv}g(v)||^2\ge 0$, implying
	the connection Laplacian $L^{\sigma}$ of $(\Gamma,\sigma)$ is positive semi-definite. Therefore, $L^{\sigma}$ is a real symmetric positive semi-definite matrix. Since the normalized connection Laplacian $\mathcal{L}^{\sigma}$ is similar to $L^{\sigma}$, $\mathcal{L}^{\sigma}$ is also a real symmetric positive semi-definite matrix and its eigenvalues are non-negative. Suppose $0\le \mu_0\le \mu_1\le \cdots \le \mu_{nd-1}$ are the eigenvalues of $\mathcal{L}^{\sigma}$ and corresponding orthonormal eigenfunctions are $\left\{\Phi_i\right\}_{i=0}^{nd-1}$. Then  \[ H_t^{\sigma}(x,y)=\sum_{i=0}^{nd-1} e^{-t\mu_i}\Phi_i(x)\overline{\Phi_i(y)}^T \]
	
	For $x,y \in V$, graph heat kernel $H_t(x,y)$ is a number while connection heat kernel $H_t^{\sigma}(x,y)$ is a matrix. For connection graph, it's more meaningful to pay attention to each block entry rather than every entry when studying the connection heat kernel.

	\section{The Connection Heat Kernel On Connection Lattices}
	First, we derive the expression of connection heat kernel on one dimensional connection lattice $\mathbb{Z}$ with a connection $\sigma:E(\mathbb{Z})\rightarrow O(d)$, where $d$ is an positive integer. Assume every edge weight is equal to $1$, that is $w_{xy}=w_{yx}=1,\forall x\sim y$. Then the normalized connection Laplacian $\mathcal{L}^{\sigma}$ of $(\mathbb{Z},\sigma)$ has the following block-triangular form:
	\begin{equation*}
		\begin{pmatrix}
			\ddots&\ddots& \ddots	&  & & & \\
			&	-\frac{1}{2} \sigma_{x,x-1} & I_{d\times d} & -\frac{1}{2} \sigma_{x,x+1} & & &  \\
			&	& -\frac{1}{2} \sigma_{x+1,x} & I_{d\times d} & -\frac{1}{2} \sigma_{x+1,x+2}& & \\
			& &	& \ddots & \ddots & \ddots&   \\
		\end{pmatrix}
	\end{equation*}
	Note that $\mathcal{L}^{\sigma}$ is infinite dimensional and the multiplication of infinite dimensional matrices may appear. Luckily, in later computation we only need to deal with the powers of infinite dimensional matrices whose rows and columns have finite non-zero entries, avoiding the bad case where associativity and distributivity may not hold for infinite-dimensional matrices.
	
	We write $\mathcal{L}^{\sigma}$ as $\hat{I}-\frac{1}{2}U^{\sigma}-\frac{1}{2}(U^{\sigma})^{-1}$, where $U^{\sigma}$ is the upper triangular block part of $\mathcal{L}^{\sigma}$, $\hat{I}$ is the diagonal block part of $\mathcal{L}^{\sigma}$ and $(U^{\sigma})^{-1}$ is the lower triangular block part of $\mathcal{L}^{\sigma}$.

		\[
		\hat{I}=	\begin{pmatrix}
			&\ddots & & & &\\
			&  & I_{d\times d} & & & \\
			&	&  & I_{d\times d} & &  \\
			& &	&  & \ddots &  \\	
		\end{pmatrix}
		\]
	
			\[ 
			U^{\sigma}=\begin{pmatrix}
				&\ddots  & \ddots & & & \\
				&	& 0_{d\times d} & \sigma_{_{x\text{,}x+1}} & &  \\
				& &	& 0_{d\times d} & \sigma_{_{x+1\text{,}x+2}} &  \\	
				& &  &	&\ddots&\ddots
			\end{pmatrix}
			\]	
		
			\[ 
			(U^{\sigma})^{-1}=\begin{pmatrix}
				\ddots	& &  & & & \\
				\sigma_{_{x+1\text{,}x}}& 0_{d\times d} &  & & &  \\
				&\sigma_{_{x+2\text{,}x+1}}	&0_{d\times d}  &  & &  \\
				&	& \ddots & \ddots & & \\	
			\end{pmatrix}
			\]

	\begin{theorem}\label{Zexpre}
		In $(\mathbb{Z},\sigma)$, $\forall x \in \mathbb{Z}, a\in \mathbb{Z}^{+} $: 
		\begin{itemize}
			\item [(1)]The diagonal block of connection heat kernel of $(\mathbb{Z},\sigma)$ is :
			\[ H_t^{\sigma}(x,x)=\sum_{k\ge 0} \frac{C_{2k}^{k}}{k!}(\frac{-t}{2})^k I_{d\times d} \]
			\item [(2)]The off-diagonal block of connection heat kernel of $(\mathbb{Z},\sigma)$ is :
			\begin{equation*}
				\begin{aligned}
					&H^{\sigma}_t(x,x+a)\\
					&=(-1)^a \sum_{k\ge 0}\frac{C_{2k}^{k+a}}{k!} (-\frac{t}{2})^k\sigma_{x,x+1}\sigma_{x+1,x+2}\cdots\sigma_{x+a-1,x+a}\\
					&H^{\sigma}_t(x,x-a)\\
					&=(-1)^a \sum_{k\ge 0}\frac{C_{2k}^{k+a}}{k!} (-\frac{t}{2})^k\sigma_{x,x-1}\sigma_{x-1,x-2}\cdots\sigma_{x-(a-1),x-a}
				\end{aligned}
			\end{equation*}
		\end{itemize}	
	\end{theorem}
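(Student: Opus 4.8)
\emph{Proof strategy.} The plan is to compute $H_t^{\sigma}=e^{-t\mathcal{L}^{\sigma}}$ directly from its Taylor series $H_t^{\sigma}=\sum_{k\ge 0}\frac{(-t)^k}{k!}(\mathcal{L}^{\sigma})^k$, first finding a closed form for every block of $(\mathcal{L}^{\sigma})^k$ and then resumming in $k$. Write $V:=U^{\sigma}$. A one-line block computation using $\sigma_{uv}=\sigma_{vu}^{-1}$ shows that $(U^{\sigma})^{-1}$ really is the two-sided inverse of $V$ and that $\hat{I},V,V^{-1}$ pairwise commute; since each of these matrices has only finitely many nonzero blocks in every row and column, all the finite products below are legitimate and the multinomial theorem applies (this is exactly the ``good case'' flagged before the statement). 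I would start from $\mathcal{L}^{\sigma}=\tfrac12\bigl(2\hat{I}-V-V^{-1}\bigr)$.

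Next I would expand $(2\hat{I}-V-V^{-1})^k$ by the multinomial theorem for commuting matrices and collapse each term $2^{r}(-1)^{p+q}V^{p}(V^{-1})^{q}$ to $2^{r}(-1)^{p+q}V^{\,p-q}$ using $V^{-1}V=\hat I$. The point is that for $a\ge 0$ the $(x,x+a)$ block of $V^{j}$ is the parallel transport $\sigma_{x,x+1}\sigma_{x+1,x+2}\cdots\sigma_{x+a-1,x+a}$ when $j=a$ and is $0$ otherwise (and symmetrically, with the leftward product, when $a<0$), and this block depends only on $a$, not on which pair $p,q$ with $p-q=a$ produced it. Hence the $(x,x+a)$ block of $(2\hat{I}-V-V^{-1})^k$ equals $c_{k,a}$ times that transport matrix, where $c_{k,a}=[z^{a}]\,(2-z-z^{-1})^k$ is read off from the formal substitution $2\hat{I}\mapsto 2,\ V\mapsto z,\ V^{-1}\mapsto z^{-1}$.

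The computational heart, and the step I expect to be the main obstacle, is evaluating $c_{k,a}$; I would do it via the factorization $2-z-z^{-1}=-(z^{1/2}-z^{-1/2})^{2}$, which gives $(2-z-z^{-1})^k=(-1)^k z^{-k}(z-1)^{2k}$ and therefore $c_{k,a}=(-1)^{a}C_{2k}^{k+a}$. Feeding this back yields
\[ (\mathcal{L}^{\sigma})^k(x,x+a)=\frac{(-1)^{a}}{2^{k}}\,C_{2k}^{k+a}\,\sigma_{x,x+1}\sigma_{x+1,x+2}\cdots\sigma_{x+a-1,x+a}, \]
together with the mirror identity for $(\mathcal{L}^{\sigma})^k(x,x-a)$.

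Finally I would resum. The operator $\mathcal{L}^{\sigma}$ is bounded with $\lVert\mathcal{L}^{\sigma}\rVert\le 2$ (the Rayleigh-quotient bound of Section~2, or simply because $U^{\sigma}$ is an isometry of $\ell^2$), so the exponential series converges in operator norm and hence block by block, and the crude estimate $C_{2k}^{k+a}\le 4^{k}$ makes each resulting scalar series absolutely convergent. Substituting the formula for $(\mathcal{L}^{\sigma})^k(x,x+a)$ then gives
\[ H_t^{\sigma}(x,x+a)=(-1)^{a}\sum_{k\ge 0}\frac{C_{2k}^{k+a}}{k!}\Bigl(-\frac t2\Bigr)^{k}\sigma_{x,x+1}\sigma_{x+1,x+2}\cdots\sigma_{x+a-1,x+a}, \]
which is the first formula in (2); part (1) is the special case $a=0$ (the empty product being $I_{d\times d}$), and the second formula in (2) is the leftward mirror image. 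Everything apart from the binomial identity in the third paragraph is routine bookkeeping with column-finite infinite matrices.
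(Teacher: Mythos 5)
Your proposal is correct and follows essentially the same route as the paper: Taylor-expand $e^{-t\mathcal{L}^{\sigma}}$, write $\mathcal{L}^{\sigma}=\hat{I}-\tfrac12 U^{\sigma}-\tfrac12 (U^{\sigma})^{-1}$, identify the $(x,x\pm a)$ block of $(U^{\sigma})^{\pm s}$ with the signature product, and extract the coefficient $(-1)^a C_{2k}^{k+a}(\tfrac12)^k$ via the factorization $2-z-z^{-1}=-(z^{1/2}-z^{-1/2})^2$ (the paper phrases this as $(-\tfrac12)^k(\sqrt{U^{\sigma}}-\sqrt{(U^{\sigma})^{-1}})^{2k}$). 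Your version is slightly more careful about commutativity, column-finiteness, and convergence, but the argument is the same.
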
 
	
	\begin{proof}
		$$
		\begin{aligned}
			H_t^{\sigma}&=exp(-t\mathcal{L}^{\sigma})\\
			&=\sum_{k\ge 0} \frac{(-t)^k}{k!}(\mathcal{L}^{\sigma})^k\\
			&=I_{d\times d}-t\mathcal{L}^{\sigma}+\frac{t^2}{2}(\mathcal{L}^{\sigma})^2 -\cdots		
		\end{aligned}
		$$
		From the above expansion of $H_t^{\sigma}$, it is sufficient to know the blocks of $ (\mathcal{L}^{\sigma})^k$ for every $k\ge 0$ in order to obtain each block of connection heat kernel. 
		
		Since $(\mathcal{L}^{\sigma})^k=(\hat{I}-\frac{1}{2}U^{\sigma}-\frac{1}{2}(U^{\sigma})^{-1})^k $ and all of the diagonal blocks of $(U^{\sigma})^s,(U^{\sigma})^{-s}$ are  $d\times d$ zero matrices when $s\ge 1$, $I_{d\times d}$ multiplied by the coefficient of $(U^{\sigma})^0$ in $(\hat{I}-\frac{1}{2}U^{\sigma}-\frac{1}{2}(U^{\sigma})^{-1})^k$ is the the diagonal block of $(\mathcal{L}^{\sigma})^k$ and each diagonal block is the same. From $(\hat{I}-\frac{1}{2}U^{\sigma}-\frac{1}{2}(U^{\sigma})^{-1})^k=(-\frac{1}{2})^k(\sqrt{U^{\sigma}}-\sqrt{(U^{\sigma})^{-1}})^{2k} $, we can see the coefficient of $(U^{\sigma})^0$ is $ (\frac{1}{2})^k C_{2k}^k$. So every diagonal block of the $(\mathcal{L}^{\sigma})^k$ is $(\frac{1}{2})^k C_{2k}^k I_{d\times d} $. Consequently, we can know every diagonal block of connection heat kernel is $\sum_{k\ge 0}\frac{C_{2k}^{k}}{k!}(\frac{-t}{2})^k I_{d\times d}$.
		
		Because $U^{\sigma}$ has non-zero blocks only on the superdiagonal, $(U^{\sigma})^s$  has non-zero blocks only on the $\{(x,x+s)\}_{x\in \mathbb{Z}}$ when $s\ge 1$.Since $(U^{\sigma})^{-1}$ is the transpose of $U^{\sigma}$, $(U^{\sigma})^{-s}$  has non-zero blocks only on the $\{(x,x-s)\}_{x\in \mathbb{Z}}$.  Obviously, $$(U^{\sigma})^s(x,x+s)=\sigma_{x,x+1}\sigma_{x+1,x+2}\cdots \sigma_{x+s-1,x+s}$$ 
		$$ (U^{\sigma})^{-s}(x,x-s)=\sigma_{x,x-1} \sigma_{x-1,x-2} \cdots  \sigma_{x-(s-1),x-s}$$ 
		
		Then $ \sigma_{x,x+1}\sigma_{x+1,x+2}\cdots \sigma_{x+s-1,x+s}$ multiplied by the coefficient of $(U^{\sigma})^a$ in $(\hat{I}-\frac{1}{2}U^{\sigma}-\frac{1}{2}(U^{\sigma})^{-1})^k$ is the block matrix on $(x,x+a)$ of $(\mathcal{L}^{\sigma})^k$ as well as $\sigma_{x,x-1} \sigma_{x-1,x-2} \cdots  \sigma_{x-(s-1),x-s} $ multiplied by the coefficient of $(U^{\sigma})^{-a}$ in $(\hat{I}-\frac{1}{2}U^{\sigma}-\frac{1}{2}(U^{\sigma})^{-1})^k$ is the block matrix on $(x,x-a)$ of $(\mathcal{L}^{\sigma})^k$. Both of the coefficients are $(-1)^a C_{2k}^{k+a}(\frac{1}{2})^k$. When $2k<k+a$,we regard $C_{2k}^{k+a}$ as $0$. As a result, we can get the expressions of $H^{\sigma}_t(x,x\pm a) $ by adding up $(\mathcal{L}^{\sigma})^k(x,x\pm a), k\ge 0 $.
	\end{proof}
	
	Next we introduce a type of connection termed product connection on the Cartesian product of connection graphs, which integrates individual connections via the Kronecker product of matrices. 
	
	\begin{definition}
		If $A$ is an $m\times n$ matrix and $B$ is a $p\times q$ matrix, then the Kronecker product $A\otimes B$ is the $mp\times nq$ matrix:
		\[ A\otimes B=
		\begin{pmatrix}
			a_{11}B&\cdots&a_{1n}B\\
			\vdots&\ddots&\vdots\\
			a_{m1}B&\cdots&a_{mn}B\\
		\end{pmatrix}
		\]
		If $A$ is an $m\times m$ matrix and $B$ is a $n\times n$ matrix, then the Kronecker sum $A\oplus B$ is
		\[A\oplus B:=A\otimes I_{n\times n} +I_{m\times m}\otimes B \]
	\end{definition}
	
	\begin{definition}
		Suppose $(\Gamma_{i},\sigma^{(i)}),i=1,2$ are connection graphs with $\sigma^{(i)}:E(\Gamma_{i})\rightarrow O(d_i)$. Let $\Gamma_{1} \square \Gamma_{2} $ be the Cartesian product graph of $\Gamma_{1}$ and $\Gamma_{2}$. Define a connection $\hat{\sigma}:E(\Gamma_{1}\square \Gamma_{2})\rightarrow O(d_1d_2)$ as follows:
		\begin{equation*}\label{eq3}
			\begin{aligned}
				\hat{\sigma}_{(x,y)(x^{'},y)}	&=\sigma^{(1)}_{xx^{'}}\otimes I_{d_2\times d_2}\\
				\hat{\sigma}_{(x,y)(x,y^{'})}	&=I_{d_1\times d_1}\otimes \sigma^{(2)}_{yy^{'}}\\
			\end{aligned}
		\end{equation*} 
		We call $\hat{\sigma}$ product connection on $\Gamma_{1} \square \Gamma_{2} $, denoted by $\sigma^{(1)}\otimes \sigma^{(2)}$.
	\end{definition}
	
	\begin{remark}
		In fact, we have to make it sure that $\hat{\sigma}$ we define is a connection on $\Gamma_{1} \square \Gamma_{2} $, which can be easily checked because the Kronecker product of two orthogonal matrices is still an orthogonal matrix and the inverse of the Kronecker product of two matrices is the Kronecker product of their inverses. 
	\end{remark}
	
	Let $\mathcal{L^{\sigma^{(i)}}}$ and $ \mathcal{L}^{\hat{\sigma}}$ be the normalized connection Laplacian of $(\Gamma_{i},\sigma^{(i)}) $ and the connection Cartesian product $(\Gamma_{1} \square \Gamma_{2},\hat{\sigma})$ respectively. Here we assume  $ \Gamma_i$ is $R_i-$regular and has simple weight. Next we demonstrate that $ \mathcal{L}^{\hat{\sigma}}$ is equal to a certain Kronecker sum involving $\mathcal{L^{\sigma^{(1)}}}$ and $\mathcal{L^{\sigma^{(2)}}}$. 
	
	Firstly, we find one basis of $C((\Gamma_{1} \square \Gamma_{2},\hat{\sigma}),\mathbb{R}^{d_1d_2})$ which is the domain of $ \mathcal{L}^{\hat{\sigma}}$. Let $n_i:=|V(\Gamma_{i})|,i=1,2$ and $n_i$ can be positive infinity if $\Gamma_{i}$ is infinite. Assume $V(\Gamma_{1})=\{x_{i_1}\}_{i_1=1}^{n_1}$ and $V(\Gamma_{2})=\{y_{i_2}\}_{i_2=1}^{n_2}$. 
	
	$\forall 1\le i_1\le n_1,1\le j_1 \le d_1$, define $\delta_{i_1}^{j_1}$ on $\Gamma_{1} $ as follows:
	\[ \delta_{i_1}^{j_1}:V(\Gamma_{1})\rightarrow \mathbb{R}^{d_1} \]
	\begin{equation*}
		\delta_{i_1}^{j_1}(z)=\left\{	\begin{aligned}
			e_{j_1} \quad &z=x_{i_1}\\
			\vec{0}  \quad &else
		\end{aligned}
		\right.
	\end{equation*}
	where $e_{j_1}$ is the unit vector in $\mathbb{R}^{d_1}$ which takes $1$ on the $j_1-$th entry.
	$ \forall 1\le i_2\le n_2,1\le j_2 \le d_2$ we can define $\chi_{i_2}^{j_2}$ on $\Gamma_{2}$ similarly.
	
	$\forall 1\le i_1\le n_1,1\le j_1 \le d_1,1\le i_2\le n_2,1\le j_2 \le d_2 $, we define $\delta_{i_1}^{j_1}\otimes \chi_{i_2}^{j_2}$ as follows:
	\[ \delta_{i_1}^{j_1}\otimes \chi_{i_2}^{j_2}:V(\Gamma_{1}\square\Gamma_{2} )\rightarrow \mathbb{R}^{d_1d_2} \]
	\[ \delta_{i_1}^{j_1}\otimes \chi_{i_2}^{j_2}(x,y)=\delta_{i_1}^{j_1}(x)\otimes \chi_{i_2}^{j_2}(y) \]
	Then $\{\delta_{i_1}^{j_1}\otimes \chi_{i_2}^{j_2}\}_{i_1=1,j_1=1,i_2=1,j_2=1}^{n_1,d_1,n_2,d_2}$ form the basis of $ C(\Gamma_{1}\square \Gamma_{2},\hat{\sigma})$.
	
	\begin{lemma}\label{lemma1}
		$$\mathcal{L}^{\hat{\sigma}} \delta_{i_1}^{j_1}\otimes \chi_{i_2}^{j_2}(x,y)= \frac{R_1}{R_1+R_2}\mathcal{L}^{\sigma^{(1)}}\delta_{i_1}^{j_1}(x)\otimes \chi_{i_2}^{j_2}(y)+\frac{R_2}{R_1+R_2}\delta_{i_1}^{j_1}(x)\otimes \mathcal{L}^{\sigma^{(2)}}\chi_{i_2}^{j_2}(y)$$
	\end{lemma}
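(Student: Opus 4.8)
The plan is to compute $\mathcal{L}^{\hat\sigma}$ applied to the basis vector directly from the action formula for the normalized connection Laplacian, exploiting the combinatorial structure of the Cartesian product together with the mixed-product property of the Kronecker product. First I would record that, since $\Gamma_1$ is $R_1$-regular and $\Gamma_2$ is $R_2$-regular with simple weights, the product $\Gamma_1\square\Gamma_2$ is $(R_1+R_2)$-regular; hence $D^{\hat\sigma}=(R_1+R_2)I$, so $\mathcal{L}^{\hat\sigma}=I-\frac{1}{R_1+R_2}A^{\hat\sigma}$, and for any $f\in C((\Gamma_1\square\Gamma_2,\hat\sigma),\mathbb{R}^{d_1d_2})$,
\[
\mathcal{L}^{\hat\sigma}f(x,y)=f(x,y)-\frac{1}{R_1+R_2}\Bigl(\sum_{x'\sim x}\hat\sigma_{(x,y)(x',y)}f(x',y)+\sum_{y'\sim y}\hat\sigma_{(x,y)(x,y')}f(x,y')\Bigr),
\]
because the neighbours of $(x,y)$ in $\Gamma_1\square\Gamma_2$ are exactly the vertices $(x',y)$ with $x'\sim x$ in $\Gamma_1$ and $(x,y')$ with $y'\sim y$ in $\Gamma_2$. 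Substituting the definition $\hat\sigma_{(x,y)(x',y)}=\sigma^{(1)}_{xx'}\otimes I_{d_2\times d_2}$ and $\hat\sigma_{(x,y)(x,y')}=I_{d_1\times d_1}\otimes\sigma^{(2)}_{yy'}$ splits the right-hand side into a sum over $\Gamma_1$-edges and a sum over $\Gamma_2$-edges.

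Next I would split the leading term as $f(x,y)=\frac{R_1}{R_1+R_2}f(x,y)+\frac{R_2}{R_1+R_2}f(x,y)$ and regroup, so the expression becomes $\frac{1}{R_1+R_2}$ times
\[
\Bigl(R_1f(x,y)-\sum_{x'\sim x}(\sigma^{(1)}_{xx'}\otimes I_{d_2\times d_2})f(x',y)\Bigr)+\Bigl(R_2f(x,y)-\sum_{y'\sim y}(I_{d_1\times d_1}\otimes\sigma^{(2)}_{yy'})f(x,y')\Bigr).
\]
Now specialize $f=\delta_{i_1}^{j_1}\otimes\chi_{i_2}^{j_2}$, so $f(x',y)=\delta_{i_1}^{j_1}(x')\otimes\chi_{i_2}^{j_2}(y)$, and apply the mixed-product identity $(A\otimes B)(u\otimes v)=(Au)\otimes(Bv)$ to each summand. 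The first bracket collapses to $\bigl(R_1\delta_{i_1}^{j_1}(x)-\sum_{x'\sim x}\sigma^{(1)}_{xx'}\delta_{i_1}^{j_1}(x')\bigr)\otimes\chi_{i_2}^{j_2}(y)$, and since $\Gamma_1$ is $R_1$-regular with simple weights the inner vector equals $L^{\sigma^{(1)}}\delta_{i_1}^{j_1}(x)=R_1\mathcal{L}^{\sigma^{(1)}}\delta_{i_1}^{j_1}(x)$, the normalized connection Laplacian differing from $L^{\sigma^{(1)}}$ by the scalar factor $1/R_1$ on a regular graph. The same manipulation handles the second bracket with the roles of $1$ and $2$ exchanged. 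Dividing by $R_1+R_2$ then yields precisely the claimed convex combination.

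The computation is essentially bookkeeping; the one step that actually uses the structure of the product connection is the interchange of the Kronecker product with the orthogonal matrices, i.e. verifying that $(\sigma^{(1)}_{xx'}\otimes I_{d_2\times d_2})(\delta_{i_1}^{j_1}(x')\otimes\chi_{i_2}^{j_2}(y))=(\sigma^{(1)}_{xx'}\delta_{i_1}^{j_1}(x'))\otimes\chi_{i_2}^{j_2}(y)$ and the analogous statement for the $\Gamma_2$-factor — this is where the specific form of $\hat\sigma$ enters. A secondary point to track carefully is the regularity normalization: the constant degrees $R_1$ and $R_2$ add in the product, and it is this fact that produces the coefficients $R_1/(R_1+R_2)$ and $R_2/(R_1+R_2)$ rather than a plain sum. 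Since $\delta_{i_1}^{j_1}\otimes\chi_{i_2}^{j_2}$ has finite support, all the sums above are finite and no convergence issue arises even when $\Gamma_1$ or $\Gamma_2$ is infinite.
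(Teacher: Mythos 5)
Your proposal is correct and follows essentially the same route as the paper's proof: expand the action of $\mathcal{L}^{\hat\sigma}$ over the two types of neighbours in the Cartesian product, apply the mixed-product property of the Kronecker product, and split the leading term with weights $\frac{R_1}{R_1+R_2}$ and $\frac{R_2}{R_1+R_2}$ to recover the two normalized Laplacians. The only difference is cosmetic ordering of the regrouping and mixed-product steps.
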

	
	\begin{proof}
		Since $\Gamma_{1}\square \Gamma_{2}$ is $(R_1+R_2)-$regular and from the definition of normalized connection Laplacian, we have
		\begin{equation}\label{eqL1}
			\begin{aligned}
				&\mathcal{L}^{\hat{\sigma}} \delta_{i_1}^{j_1}\otimes \chi_{i_2}^{j_2}(x,y)\\
				&=\delta_{i_1}^{j_1}\otimes \chi_{i_2}^{j_2}(x,y)-\dfrac{1}{R_1+R_2}\left[\sum_{x^{'}\sim x}\left(\sigma^{(1)}_{xx^{'}}\otimes I_{d_2}\right)\left(\delta_{i_1}^{j_1}(x^{'})\otimes \chi_{i_2}^{j_2}(y)\right)\right.\\	
				&+\left.\sum_{y^{'}\sim y}\left(I_{d_1}\otimes \sigma^{(2)}_{yy^{'}}\right)\left(\delta_{i_1}^{j_1}(x)\otimes \chi_{i_2}^{j_2}(y^{'})\right)\right]			
			\end{aligned}
		\end{equation}
		According to the mixed-product property and bilinearity of "$\otimes$", we have 
		\begin{equation}\label{eqL2}
			\begin{aligned}
				\sum_{x^{'}\sim x}\left(\sigma^{(1)}_{xx^{'}}\otimes I_{d_2}\right)\left(\delta_{i_1}^{j_1}(x^{'})\otimes \chi_{i_2}^{j_2}(y)\right)&=\left(\sum_{x^{'}\sim x}\sigma^{(1)}_{xx^{'}}\delta_{i_1}^{j_1}(x^{'})\right)\otimes \chi_{i_2}^{j_2}(y)\\	
				\sum_{y^{'}\sim y}\left(I_{d_1}\otimes \sigma^{(2)}_{yy^{'}}\right)\left(\delta_{i_1}^{j_1}(x)\otimes \chi_{i_2}^{j_2}(y^{'})\right)&=\delta_{i_1}^{j_1}(x)\otimes \left(\sum_{y^{'}\sim y}\sigma^{(2)}_{yy^{'}}\chi_{i_2}^{j_2}(y^{'})\right)
			\end{aligned}
		\end{equation}
		Dividing $\delta_{i_1}^{j_1}\otimes \chi_{i_2}^{j_2}$ into $\frac{R_1}{R_1+R_2}\delta_{i_1}^{j_1}\otimes \chi_{i_2}^{j_2}$ and $\frac{R_2}{R_1+R_2}\delta_{i_1}^{j_1}\otimes \chi_{i_2}^{j_2}$ as well as putting equation (\ref{eqL2}) into equation (\ref{eqL1}), we get
		\begin{align*}
			\mathcal{L}^{\hat{\sigma}} \delta_{i_1}^{j_1}\otimes \chi_{i_2}^{j_2}(x,y)
			&=\frac{R_1}{R_1+R_2}\left[\left(\delta_{i_1}^{j_1}(x)-\frac{1}{R_1}\sum_{x^{'}\sim x}\sigma^{(1)}_{xx^{'}}\delta_{i_1}^{j_1}(x^{'})\right)\otimes \chi_{i_2}^{j_2}(y) \right]\\
			&+\frac{R_2}{R_1+R_2}\left[\delta_{i_1}^{j_1}(x)\otimes \left(\chi_{i_2}^{j_2}(y)-\frac{1}{R_2}\sum_{y^{'}\sim y}\sigma^{(2)}_{yy^{'}}\chi_{i_2}^{j_2}(y^{'})\right)\right]\\
			&=\frac{R_1}{R_1+R_2}\mathcal{L}^{\sigma^{(1)}}\delta_{i_1}^{j_1}(x)\otimes \chi_{i_2}^{j_2}(y)+
			\frac{R_2}{R_1+R_2}\delta_{i_1}^{j_1}(x)\otimes \mathcal{L}^{\sigma^{(2)}}\chi_{i_2}^{j_2}(y)
		\end{align*}
	\end{proof}
	
	\begin{theorem}\label{thm1}
		\[ \mathcal{L}^{\hat{\sigma}}=\left(\frac{R_1}{R_1+R_2}\mathcal{L}^{\sigma^{(1)}}\right)\oplus \left(\frac{R_2}{R_1+R_2}\mathcal{L}^{\sigma^{(2)}}\right) \]
	\end{theorem}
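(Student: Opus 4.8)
The plan is to deduce Theorem \ref{thm1} directly from Lemma \ref{lemma1} by recognizing the right-hand side of that lemma as the coordinate expression of a Kronecker sum, once the basis of $C((\Gamma_1\square\Gamma_2,\hat{\sigma}),\mathbb{R}^{d_1d_2})$ is organized to reflect the tensor structure. First I would fix the lexicographic ordering of the basis $\{\delta_{i_1}^{j_1}\otimes\chi_{i_2}^{j_2}\}$ in which the pair $(i_1,j_1)$ runs through the $n_1d_1$ coordinates of $C((\Gamma_1,\sigma^{(1)}),\mathbb{R}^{d_1})$ as the outer index and $(i_2,j_2)$ through the $n_2d_2$ coordinates of $C((\Gamma_2,\sigma^{(2)}),\mathbb{R}^{d_2})$ as the inner index; this identifies the ambient space with $C((\Gamma_1,\sigma^{(1)}),\mathbb{R}^{d_1})\otimes C((\Gamma_2,\sigma^{(2)}),\mathbb{R}^{d_2})$ and, crucially, makes the matrix of an operator of the form $S\otimes T$ equal to the Kronecker product of the matrices of $S$ and $T$ in the sense of the definition above.

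Next I would observe that, under this identification, the operator $\left(\frac{R_1}{R_1+R_2}\mathcal{L}^{\sigma^{(1)}}\right)\otimes I_{n_2d_2}$ sends the basis vector $\delta_{i_1}^{j_1}\otimes\chi_{i_2}^{j_2}$ to $\frac{R_1}{R_1+R_2}\,\mathcal{L}^{\sigma^{(1)}}\delta_{i_1}^{j_1}\otimes\chi_{i_2}^{j_2}$, and symmetrically $I_{n_1d_1}\otimes\left(\frac{R_2}{R_1+R_2}\mathcal{L}^{\sigma^{(2)}}\right)$ sends it to $\delta_{i_1}^{j_1}\otimes\frac{R_2}{R_1+R_2}\,\mathcal{L}^{\sigma^{(2)}}\chi_{i_2}^{j_2}$. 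Adding these two and invoking the definition of the Kronecker sum, the operator $\left(\frac{R_1}{R_1+R_2}\mathcal{L}^{\sigma^{(1)}}\right)\oplus\left(\frac{R_2}{R_1+R_2}\mathcal{L}^{\sigma^{(2)}}\right)$ acts on $\delta_{i_1}^{j_1}\otimes\chi_{i_2}^{j_2}$ exactly by the right-hand side of Lemma \ref{lemma1}, which by that lemma equals $\mathcal{L}^{\hat{\sigma}}\delta_{i_1}^{j_1}\otimes\chi_{i_2}^{j_2}(x,y)$. Since the two operators agree on every element of a basis, they coincide, giving the claimed identity.

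The main point to be careful about — rather than a genuine obstacle — is the bookkeeping of the matrix interpretation: the Kronecker sum in the statement is a matrix operation, so one must make sure the chosen ordering of basis vectors is consistent with the block-row/block-column conventions used for $\mathcal{L}^{\hat{\sigma}}$ and with the Kronecker product notation in the definition; a different ordering would only yield a permutation-conjugate of the stated matrix. A secondary point is that when $\Gamma_1$ or $\Gamma_2$ is infinite the operators are infinite matrices, but since each $\delta_{i_1}^{j_1}$ and $\chi_{i_2}^{j_2}$ is finitely supported and each $\mathcal{L}^{\sigma^{(i)}}$ has only finitely many nonzero entries per row, all the sums above are finite and the manipulations are legitimate, just as in the one-dimensional computation of Theorem \ref{Zexpre}. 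Finally one should note that it is precisely the $R_i$-regularity assumption that makes the degree normalization a scalar and lets the normalized Laplacian split with the constants $\frac{R_i}{R_1+R_2}$; without regularity these would have to become diagonal matrices and the clean Kronecker-sum form would fail.
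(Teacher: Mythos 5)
Your proposal is correct and follows essentially the same route as the paper: both deduce the theorem from Lemma \ref{lemma1} by checking that the Kronecker-sum operator and $\mathcal{L}^{\hat{\sigma}}$ agree on the basis $\{\delta_{i_1}^{j_1}\otimes\chi_{i_2}^{j_2}\}$ and then extending by linearity. Your additional remarks on the basis ordering and the locally finite case make explicit some bookkeeping the paper leaves implicit, but they do not change the argument.
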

	
	\begin{proof}
		For any function $F\in C((\Gamma_{1}\square \Gamma_{2},\hat{\sigma}),\mathbb{R}^{d_1d_2}) $ $$F=\sum_{i_1,i_2,j_1,j_2}c_{i_1,i_2}^{j_1,j_2}\delta_{i_1}^{j_1}\otimes \chi_{i_2}^{j_2}$$ As a result of Lemma \ref{lemma1} and the linearity of $\mathcal{L}^{\hat{\sigma}}$ , we have \[ \mathcal{L}^{\hat{\sigma}}F= \left(\frac{R_1}{R_1+R_2}\mathcal{L}^{\sigma^{(1)}}\otimes I_{n_2d_2}+\frac{R_2}{R_1+R_2}I_{n_1d_1}\otimes \mathcal{L}^{\sigma^{(2)}}\right)F\]
		Therefore, \[ \begin{aligned}
			\mathcal{L}^{\hat{\sigma}}&= \frac{R_1}{R_1+R_2}\mathcal{L}^{\sigma^{(1)}}\otimes I_{n_2d_2}+\frac{R_2}{R_1+R_2}I_{n_1d_1}\otimes \mathcal{L}^{\sigma^{(2)}}\\
			&=\left(\frac{R_1}{R_1+R_2}\mathcal{L}^{\sigma^{(1)}}\right)\oplus \left(\frac{R_2}{R_1+R_2}\mathcal{L}^{\sigma^{(2)}}\right) 
		\end{aligned}
		\]
	\end{proof}
	
	In the aforementioned discussion, we define product connection only on the Cartesian product of two connection graphs. In fact, $\forall m\in \mathbb{Z}^{+}$, product connection can be defined on the Cartesian product of $m$ connection graphs.
	\begin{definition}
		Suppose $\{(\Gamma_{i},\sigma^{(i)})\}_{i=1}^{m}$ are connection graphs with $\sigma^{(i)}:E(\Gamma_{i})\rightarrow O(d_i)$. Let $\Gamma_{1} \square \Gamma_{2}\square \cdots\square \Gamma_{m} $ be the Cartesian product graph of $\{\Gamma_{i}\}_{i=1}^{m}$ . Define a connection $\hat{\sigma}:E(\Gamma_{1} \square \Gamma_{2}\square\cdots\square \Gamma_{m})\rightarrow O(d_1d_2\cdots d_m)$ as follows:
		\begin{equation*}
			\begin{aligned}
				&\hat{\sigma}_{(x_1,x_2,\cdots,x_i,\cdots,x_m)(x_1,x_2,\cdots,y_i,\cdots,x_m)}	\\
				&=I_{d_1\times d_1}\otimes  \cdots\otimes I_{d_{i-1}\times d_{i-1}}\otimes \sigma^{(i)}_{x_iy_i}\otimes I_{d_{i+1}\times d_{i+1}}\otimes \cdots \otimes I_{d_m\times d_m}\\
			\end{aligned}
		\end{equation*} 
		We call $\hat{\sigma}$ product connection on $\Gamma_{1} \square \Gamma_{2}\square \cdots\square \Gamma_{m} $, which is denoted by $\sigma^{(1)}\otimes\cdots\otimes \sigma^{(m)}$	.
	\end{definition}
	
	Let $ \mathcal{L}^{\hat{\sigma}}$ be the normalized connection Laplacian of the Cartesian product $\Gamma_{1} \square \cdots\square \Gamma_{m}$ with product connection $\hat{\sigma}$. Assume every $ \Gamma_i$ is $R_i-$regular and has simple weight. The derivation of the subsequent theorem is very similar to the proof of Theorem \ref{thm1} so its proof is omitted. 
	
	\begin{theorem}\label{n-composition}
		\[ 
		\mathcal{L}^{\hat{\sigma}}=\left(\frac{R_1}{R_1+\cdots+R_m}\mathcal{L}^{\sigma^{(1)}}\right)\oplus \cdots \oplus \left(\frac{R_m}{R_1+\cdots+R_m}\mathcal{L}^{\sigma^{(n)}}\right)
		\]
	\end{theorem}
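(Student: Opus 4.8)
The plan is to proceed by induction on the number of factors $m$, using Theorem \ref{thm1} as the base case $m=2$ (the case $m=1$ being trivial). The inductive strategy is to regard the $m$-fold Cartesian product as a $2$-fold product: write $\Gamma_1 \square \cdots \square \Gamma_m = (\Gamma_1 \square \cdots \square \Gamma_{m-1}) \square \Gamma_m$, and observe that the product connection $\sigma^{(1)} \otimes \cdots \otimes \sigma^{(m)}$ restricted to this grouping is exactly the two-factor product connection $\bigl(\sigma^{(1)} \otimes \cdots \otimes \sigma^{(m-1)}\bigr) \otimes \sigma^{(m)}$. This compatibility of the product-connection definition with regrouping of factors is the one point that genuinely needs checking: an edge of the big product graph that moves the $i$-th coordinate (for $i < m$) gets assigned, under the $m$-fold definition, the matrix $I \otimes \cdots \otimes \sigma^{(i)}_{x_i y_i} \otimes \cdots \otimes I$, and one must see that this agrees with first forming $\sigma^{(1)}\otimes\cdots\otimes\sigma^{(m-1)}$ on the $(m-1)$-fold product and then tensoring with $I_{d_m}$ on the right; this is immediate from associativity of the Kronecker product and the block structure of $\delta$-type basis functions.

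Next I would apply the two-factor Theorem \ref{thm1} to this grouping. Note that $\Gamma_1 \square \cdots \square \Gamma_{m-1}$ is $(R_1 + \cdots + R_{m-1})$-regular (a standard fact about Cartesian products, already used implicitly in Lemma \ref{lemma1}), so Theorem \ref{thm1} gives
\[
\mathcal{L}^{\hat{\sigma}} = \left(\frac{R_1 + \cdots + R_{m-1}}{R_1 + \cdots + R_m}\, \mathcal{L}^{\sigma^{(1)} \otimes \cdots \otimes \sigma^{(m-1)}}\right) \oplus \left(\frac{R_m}{R_1 + \cdots + R_m}\, \mathcal{L}^{\sigma^{(m)}}\right).
\]
By the inductive hypothesis applied to the $(m-1)$-fold product,
\[
\mathcal{L}^{\sigma^{(1)} \otimes \cdots \otimes \sigma^{(m-1)}} = \left(\frac{R_1}{R_1 + \cdots + R_{m-1}} \mathcal{L}^{\sigma^{(1)}}\right) \oplus \cdots \oplus \left(\frac{R_{m-1}}{R_1 + \cdots + R_{m-1}} \mathcal{L}^{\sigma^{(m-1)}}\right).
\]

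Finally I would combine these two displays. Substituting the second into the first, the prefactor $\frac{R_1 + \cdots + R_{m-1}}{R_1 + \cdots + R_m}$ distributes over the Kronecker sum and cancels the denominators $R_1 + \cdots + R_{m-1}$ inside, leaving each term with coefficient $\frac{R_j}{R_1 + \cdots + R_m}$. Here one uses the two elementary algebraic facts that scalar multiplication distributes over Kronecker sum, $c(A \oplus B) = (cA) \oplus (cB)$, and that Kronecker sum is associative, $(A \oplus B) \oplus C = A \oplus (B \oplus C)$, so that the nested sum collapses to the flat $m$-fold Kronecker sum asserted in the theorem. I do not expect any real obstacle: the only non-bookkeeping step is the regrouping compatibility of the product connection noted above, and that follows directly from associativity of $\otimes$; everything else is the same computation as in Theorem \ref{thm1} packaged inductively, which is precisely why the authors remark that the proof is "very similar" and omit it.
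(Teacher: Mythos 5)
Your proof is correct. The paper itself omits the argument, saying only that it is ``very similar to the proof of Theorem \ref{thm1}''; the intended route is evidently a direct $m$-factor analogue of Lemma \ref{lemma1}, i.e.\ one computes $\mathcal{L}^{\hat{\sigma}}$ on the basis functions $\delta_{i_1}^{j_1}\otimes\cdots\otimes\delta_{i_m}^{j_m}$ by splitting the neighbour sum at $(x_1,\dots,x_m)$ into $m$ groups according to which coordinate moves, and then concludes by linearity exactly as in Theorem \ref{thm1}. You instead induct on $m$, regrouping $\Gamma_1\square\cdots\square\Gamma_m$ as $(\Gamma_1\square\cdots\square\Gamma_{m-1})\square\Gamma_m$ and invoking Theorem \ref{thm1} as a black box. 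The two are close in substance, but yours genuinely shifts the work: the only new thing you must verify is that the product connection is compatible with regrouping, $\bigl(\sigma^{(1)}\otimes\cdots\otimes\sigma^{(m-1)}\bigr)\otimes\sigma^{(m)}=\sigma^{(1)}\otimes\cdots\otimes\sigma^{(m)}$, which you correctly reduce to associativity of the Kronecker product (checking both types of edges: those moving a coordinate $i<m$ and those moving coordinate $m$), plus the elementary identities $c(A\oplus B)=(cA)\oplus(cB)$ and associativity of $\oplus$, which make the telescoping of the coefficients $\frac{R_1+\cdots+R_{m-1}}{R_1+\cdots+R_m}\cdot\frac{R_j}{R_1+\cdots+R_{m-1}}=\frac{R_j}{R_1+\cdots+R_m}$ go through. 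One hypothesis you use silently but should state: to apply Theorem \ref{thm1} to the pair $\bigl(\Gamma_1\square\cdots\square\Gamma_{m-1},\,\Gamma_m\bigr)$ you need the intermediate product not only to be $(R_1+\cdots+R_{m-1})$-regular, which you note, but also to have simple weight; this holds because Cartesian product edges inherit the weight of the single coordinate that moves. With that remark added, your inductive argument is a complete and slightly more modular alternative to the paper's direct computation.
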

	
	Afterwards, we derive the expression of connection heat kernel on $\mathbb{Z}^n$ with product connection $\hat{\sigma}$. Suppose $\left\{ (\mathbb{Z},\sigma^{(i)})\right\}_{i=1}^n$ are connection graphs and $\hat{\sigma}=\sigma^{(1)}\otimes \sigma^{(2)}\otimes\cdots\otimes\sigma^{(n)}$. Let $\mathcal{L}^{\mathbb{Z}^n,\hat{\sigma}} $ be the normalized connection Laplacian of $(\mathbb{Z}^n,\hat{\sigma})$ and $H_t^{\mathbb{Z}^n,\hat{\sigma}} $ be the connection heat kernel on $(\mathbb{Z}^n,\hat{\sigma})$. 
	
	To enhance the elegance of the expression of $H_t^{\mathbb{Z}^n,\hat{\sigma}} $, we introduce one concept called signature in connection graph.
	\begin{definition}
		Let $(\Gamma,\sigma)$ be a connection graph. For any path $P: x_0\sim x_1\sim \cdots \sim x_n$ , the signature of $P$ is defined as follows:\[ \sigma_{P}=\sigma_{x_0x_1}\sigma_{x_1x_2}\cdots\sigma_{x_{n-1}x_n} \] 	
	\end{definition}
	
	Assume $x$ and $y$ are two vertices in $(\mathbb{Z},\sigma)$, there exists only one path $P$ from $x$ to $y$. We denote the signature of the unique path by $\sigma_{P_{x\rightarrow y}}$ and say $\sigma_{P_{x\rightarrow y}}$ is the signature from $x$ to $y$ in $(\mathbb{Z},\sigma)$.
	
	\begin{theorem}
		\begin{equation*}\label{equZ}
			\begin{aligned}
				&H_t^{\mathbb{Z}^n,\hat{\sigma}}\left(\left(x_1,x_2,\cdots,x_n\right),\left(x_1+a_1,x_2+a_2,\cdots,x_n+a_n\right)\right)\\
				&=\prod_{i=1}^{n}\left((-1)^{|a_i|} \sum_{k\ge 0}\frac{C_{2k}^{k+|a_i|}}{k!} (-\frac{t}{2n})^k\right) \sigma^{(1)}_{P_{x_1\rightarrow x_1+a_1}}\otimes \cdots\otimes \sigma^{(n)}_{P_{x_n\rightarrow x_n+a_n}}
			\end{aligned}
		\end{equation*}
		where $ (x_1,x_2,\cdots,x_n),(a_1,\cdots,a_n)\in \mathbb{Z}^n $ and $ \sigma^{(i)}_{P_{x_i\rightarrow x_i+a_i}}$ is the signature from $x_i$ to $x_i+a_i$ in $(\mathbb{Z},\sigma^{(i)})$.
	\end{theorem}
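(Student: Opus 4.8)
The plan is to reduce the $n$-dimensional formula to the one-dimensional case of Theorem~\ref{Zexpre} by combining the Kronecker-sum decomposition of the normalized connection Laplacian (Theorem~\ref{n-composition}) with the identity $\exp(A_1\oplus\cdots\oplus A_n)=\exp(A_1)\otimes\cdots\otimes\exp(A_n)$. As a first step I would rewrite Theorem~\ref{Zexpre} uniformly: for every $x\in\mathbb{Z}$ and every $b\in\mathbb{Z}$,
\[
H_t^{\sigma}(x,x+b)=(-1)^{|b|}\sum_{k\ge 0}\frac{C_{2k}^{k+|b|}}{k!}\Bigl(-\frac{t}{2}\Bigr)^{k}\sigma_{P_{x\rightarrow x+b}},
\]
where $\sigma_{P_{x\rightarrow x}}:=I_{d\times d}$ is the signature of the trivial path; the three cases $b>0$, $b<0$, $b=0$ are exactly the two off-diagonal formulas and the diagonal formula already proved.

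Next, since $\mathbb{Z}$ with unit weights is $2$-regular, the Cartesian product $\mathbb{Z}^n=\mathbb{Z}\,\square\cdots\square\,\mathbb{Z}$ has all $R_i=2$ and $R_1+\cdots+R_n=2n$, so Theorem~\ref{n-composition} specializes to
\[
\mathcal{L}^{\mathbb{Z}^n,\hat{\sigma}}=\frac{1}{n}\mathcal{L}^{\sigma^{(1)}}\oplus\cdots\oplus\frac{1}{n}\mathcal{L}^{\sigma^{(n)}}.
\]
Setting $A_i:=-\frac{t}{n}\mathcal{L}^{\sigma^{(i)}}$ and using $c(A\oplus B)=(cA)\oplus(cB)$, we have $-t\,\mathcal{L}^{\mathbb{Z}^n,\hat{\sigma}}=A_1\oplus\cdots\oplus A_n=\sum_{i=1}^{n}I\otimes\cdots\otimes A_i\otimes\cdots\otimes I$. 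The $n$ summands on the right act on distinct tensor factors, hence commute pairwise, so exponentiating factor by factor gives
\[
H_t^{\mathbb{Z}^n,\hat{\sigma}}=\exp\bigl(-t\,\mathcal{L}^{\mathbb{Z}^n,\hat{\sigma}}\bigr)=\bigotimes_{i=1}^{n}\exp(A_i)=\bigotimes_{i=1}^{n}H_{t/n}^{\sigma^{(i)}},
\]
where $H_{t/n}^{\sigma^{(i)}}=\exp\bigl(-\frac{t}{n}\mathcal{L}^{\sigma^{(i)}}\bigr)$ is the connection heat kernel of $(\mathbb{Z},\sigma^{(i)})$ evaluated at time $t/n$.

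Finally I would extract the block indexed by $\bigl((x_1,\dots,x_n),(x_1+a_1,\dots,x_n+a_n)\bigr)$. For a Kronecker product of block matrices whose blocks are indexed by $\mathbb{Z}$, this block equals $H_{t/n}^{\sigma^{(1)}}(x_1,x_1+a_1)\otimes\cdots\otimes H_{t/n}^{\sigma^{(n)}}(x_n,x_n+a_n)$. Inserting the uniform one-dimensional formula of the first step into each factor, with $t$ replaced by $t/n$ so that $-t/2$ becomes $-t/(2n)$, each factor is $\bigl((-1)^{|a_i|}\sum_{k\ge 0}\frac{C_{2k}^{k+|a_i|}}{k!}(-\frac{t}{2n})^k\bigr)\,\sigma^{(i)}_{P_{x_i\rightarrow x_i+a_i}}$; pulling the scalar coefficients out of the Kronecker product by multilinearity then gives exactly
\[
\prod_{i=1}^{n}\Bigl((-1)^{|a_i|}\sum_{k\ge 0}\frac{C_{2k}^{k+|a_i|}}{k!}\Bigl(-\frac{t}{2n}\Bigr)^{k}\Bigr)\;\sigma^{(1)}_{P_{x_1\rightarrow x_1+a_1}}\otimes\cdots\otimes\sigma^{(n)}_{P_{x_n\rightarrow x_n+a_n}},
\]
which is the claimed expression.

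The only genuine point requiring care is that $\mathcal{L}^{\sigma^{(i)}}$ and $\mathcal{L}^{\mathbb{Z}^n,\hat{\sigma}}$ are infinite matrices, so the identities $\exp(\bigoplus_i A_i)=\bigotimes_i\exp(A_i)$ and ``the block of $A\otimes B$ is the Kronecker product of the corresponding blocks'' must be checked block by block rather than invoked abstractly. As in the proof of Theorem~\ref{Zexpre}, this is legitimate because every power of $\mathcal{L}^{\sigma^{(i)}}$ has only finitely many nonzero blocks in each row and column, hence so does every power of $\mathcal{L}^{\mathbb{Z}^n,\hat{\sigma}}$; consequently each block of the exponential series is an honest block-by-block finite sum on which the Kronecker-product manipulations are valid. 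I expect this bookkeeping, rather than any conceptual difficulty, to be the main obstacle.
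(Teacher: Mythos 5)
Your proposal is correct and follows essentially the same route as the paper: invoke Theorem~\ref{n-composition} to write $\mathcal{L}^{\mathbb{Z}^n,\hat{\sigma}}$ as the Kronecker sum $\frac{1}{n}\mathcal{L}^{\sigma^{(1)}}\oplus\cdots\oplus\frac{1}{n}\mathcal{L}^{\sigma^{(n)}}$, exponentiate to get $H_t^{\mathbb{Z}^n,\hat{\sigma}}=H_{t/n}^{\sigma^{(1)}}\otimes\cdots\otimes H_{t/n}^{\sigma^{(n)}}$, and read off the block using Theorem~\ref{Zexpre}. Your added attention to the block-by-block justification of the exponential identity in the infinite-dimensional setting is a point the paper leaves implicit, but it does not change the argument.
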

	
	\begin{proof}
		Due to Theorem \ref{n-composition}, we have $$\mathcal{L}^{\mathbb{Z}^n,\hat{\sigma}}=\left(\frac{1}{n}\mathcal{L}^{\sigma^{(1)}}\right)\oplus \left(\frac{1}{n}\mathcal{L}^{\sigma^{(2)}}\right)\oplus \cdots \oplus \left(\frac{1}{n}\mathcal{L}^{\sigma^{(n)}}\right)$$
		Therefore,\begin{equation*}
			\begin{aligned}
				H_t^{\mathbb{Z}^n,\hat{\sigma}}&=exp(-t\mathcal{L}^{\mathbb{Z}^n,\hat{\sigma}})\\
				&=exp\left[\left(-\frac{t}{n}\mathcal{L}^{\sigma^{(1)}}\right)\oplus \left(-\frac{t}{n}\mathcal{L}^{\sigma^{(2)}}\right)\oplus \cdots \oplus \left(-\frac{t}{n}\mathcal{L}^{\sigma^{(n)}}\right)\right]\\
				&=exp(-\frac{t}{n}\mathcal{L}^{\sigma^{(1)}})\otimes exp(-\frac{t}{n}\mathcal{L}^{\sigma^{(2)}})\otimes\cdots \otimes exp(-\frac{t}{n}\mathcal{L}^{\sigma^{(n)}})\\
				&=H_{\frac{t}{n}}^{\sigma^{(1)}}\otimes H_{\frac{t}{n}}^{\sigma^{(2)}}\otimes \cdots\otimes H_{\frac{t}{n}}^{\sigma^{(n)}}
			\end{aligned}
		\end{equation*} 
		and \begin{equation*}
			\begin{aligned}
				&H_t^{\mathbb{Z}^n,\hat{\sigma}}\left(\left(x_1,x_2,\cdots,x_n\right),\left(x_1+a_1,x_2+a_2,\cdots,x_n+a_n\right)\right)\\
				&=H_{\frac{t}{n}}^{\sigma^{(1)}}(x_1,x_1+a_1)\otimes H_{\frac{t}{n}}^{\sigma^{(2)}}(x_2,x_2+a_2)\otimes \cdots\otimes H_{\frac{t}{n}}^{\sigma^{(n)}}(x_n,x_n+a_n)\\
			\end{aligned}
		\end{equation*}
		Putting the results of Theorem \ref{Zexpre} into the above expression, we hereby finish the proof.
	\end{proof}
	
	\section{The Connection Heat Kernel On Consistent Graph}
	
	\begin{definition}
		If the signature of every cycle in $\Gamma$ is equal to the identity matrix,  we call $(\Gamma,\sigma)$ a consistent graph and we say $\sigma$ is a balanced connection.
	\end{definition}
	
	\begin{remark}
		Assume $(\Gamma,\sigma)$ is consistent graph, if $P_1$ and $P_2$ are two paths which have same starting point and ending point, then the signature of $P_1$ is the same as the signature of $P_2$.
	\end{remark}
	
	More information about the properties and equivalent definition of consistent graph can be found in \cite{chung2013local,chung2014ranking}. 
	
	Combining the findings in the previous section with the expression of heat kernel on lattices without connection in \cite{chung1997combinatorial}, we observe that connection heat kernel on $(\mathbb{Z}^n,\hat{\sigma})$ is equal to certain signature multiplied by the heat kernel of $\mathbb{Z}^n$ .  In this section, the following theorem elucidates that the connection heat kernel $H_t^{\sigma}(x,y)$ on any consistent graph $(\Gamma,\sigma) $ equals the signature of any path from $x$ to $y$ multiplied by the heat kernel $H_t(x,y)$ of $\Gamma$, which is verified in different ways depending on whether $\Gamma$ is finite or infinite. 
	
	\begin{theorem}\label{consisthm}
		For any consistent graph $(\Gamma,\sigma)$:
		\[ H_t^{\sigma}(x,y)=H_t(x,y)\sigma_{P_{x\rightarrow y}} \] 
		where $H_t^{\sigma} $ is connection heat kernel on $(\Gamma,\sigma)$, $H_t(x,y)$ is heat kernel on underlying graph $\Gamma$ and $\sigma_{P_{x\rightarrow y}}$ is the signature from $x$ to $y$ in $(\Gamma,\sigma)$.
	\end{theorem}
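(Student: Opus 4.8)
The plan is to use the classical fact that a balanced connection is a coboundary. On each connected component of $\Gamma$ I would pick a base vertex $v_0$ and define a gauge $\tau\colon V\to O(d)$ by $\tau(v)=\sigma_{P_{v_0\to v}}$; this is well defined because, by the Remark following the definition of a consistent graph, the signature of a path depends only on its endpoints. A one-line check on each edge $(u,v)$ — using that the cycle $v_0\to u\to v\to v_0$ has signature $I$ — gives $\sigma_{uv}=\tau(u)\tau(v)^{-1}$, and consequently $\sigma_{P_{x\to y}}=\tau(x)\tau(y)^{-1}$ for all $x,y$ in the component.

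\textbf{Finite case.} Let $T$ be the block-diagonal orthogonal matrix on $\mathbb{R}^{d|V|}$ with $(v,v)$-block $\tau(v)$. Then $T$ commutes with $D^{\sigma}=D\otimes I_d$ (hence with $(D^{\sigma})^{-1/2}$), while $T^{-1}A^{\sigma}T$ has $(u,v)$-block $\tau(u)^{-1}w_{uv}\sigma_{uv}\tau(v)=w_{uv}I_d$, so that $T^{-1}A^{\sigma}T=A\otimes I_d$ and therefore $T^{-1}\mathcal{L}^{\sigma}T=\mathcal{L}\otimes I_d$. Exponentiating, $H_t^{\sigma}=e^{-t\mathcal{L}^{\sigma}}=T\,(H_t\otimes I_d)\,T^{-1}$, whose $(x,y)$-block is $\tau(x)H_t(x,y)\tau(y)^{-1}=H_t(x,y)\,\sigma_{P_{x\to y}}$. (Equivalently, conjugation by $T$ carries the orthonormal eigenbasis $\{\phi_i\otimes e_j\}$ of $\mathcal{L}\otimes I_d$ to one for $\mathcal{L}^{\sigma}$, and the identity drops out of the spectral expansion of $H_t^{\sigma}$ recorded in Section~2.)

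\textbf{Infinite case.} Here $H_t^{\sigma}$ is available only as the convergent power series $\sum_{k\ge 0}\frac{(-t)^k}{k!}(\mathcal{L}^{\sigma})^k$ — legitimate since $\mathcal{L}^{\sigma}$ is bounded with spectrum in $[0,2]$ and each $(\mathcal{L}^{\sigma})^k$ is locally finite, so the block multiplications are associative (the caveat flagged in Section~3). I would prove by induction on $k$ that $(\mathcal{L}^{\sigma})^k(x,y)=(\mathcal{L}^k)(x,y)\,\sigma_{P_{x\to y}}$, where $(\mathcal{L}^k)(x,y)$ is the scalar entry of the ordinary normalized Laplacian; the case $k=0$ uses that the empty path has signature $I$. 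For the inductive step, expand $(\mathcal{L}^{\sigma})^{k+1}(x,y)=\sum_z(\mathcal{L}^{\sigma})^k(x,z)\,\mathcal{L}^{\sigma}(z,y)$; the surviving terms have $z=y$ or $z\sim y$, and in each of them $\sigma_{P_{x\to z}}\,\mathcal{L}^{\sigma}(z,y)=(\mathcal{L})(z,y)\,\sigma_{P_{x\to y}}$, because appending the edge (or the trivial step) $z\to y$ to a path $x\to z$ is again a path $x\to y$ and signatures are endpoint-determined. Summing over $k$ gives $H_t^{\sigma}(x,y)=\bigl(\sum_k\frac{(-t)^k}{k!}(\mathcal{L}^k)(x,y)\bigr)\sigma_{P_{x\to y}}=H_t(x,y)\,\sigma_{P_{x\to y}}$.

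\textbf{Main obstacle.} The real content is the ``balanced $\Rightarrow$ coboundary'' reduction and making it airtight when $d>1$ and $\Gamma$ is possibly infinite or disconnected: one must fix base vertices componentwise, verify well-definedness of $\tau$ directly from the cycle condition (this is precisely where consistency enters), and, in the infinite setting, confirm that the local-finiteness/associativity caveat of Section~3 is genuinely in force before manipulating the series termwise. Everything else is routine bookkeeping with Kronecker products and the mixed-product rule.
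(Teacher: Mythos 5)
Your proposal is correct in substance, and your infinite case is essentially the paper's argument: the paper asserts $(\mathcal{L}^{\sigma})^k(x,y)=\mathcal{L}^k(x,y)\sigma_{P_{x\rightarrow y}}$ by interpreting the entries of powers as signed sums over walks, while you prove the same identity by induction on $k$ (arguably more carefully), and both arguments then sum the Taylor series of $e^{-t\mathcal{L}^{\sigma}}$ termwise. In the finite case you take a genuinely different route. The paper builds an explicit orthonormal eigensystem $\Phi_{i,j}(x)=f_i(x)g_j(x)$ with $g_j(x)=\sigma_{P_{x\rightarrow x_1}}e_j$, invoking the known spectral correspondence between $\mathcal{L}$ and $\mathcal{L}^{\sigma}$ for consistent graphs, substitutes into the spectral expansion of $H_t^{\sigma}$, and evaluates $\sum_j g_j(x)\overline{g_j(y)}^T=\sigma_{P_{x\rightarrow y}}$; you instead conjugate $\mathcal{L}^{\sigma}$ to $\mathcal{L}\otimes I_d$ by a block-diagonal gauge $T$ and exponentiate. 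Your version buys something real: it does not need the cited spectral result (it reproves it), and the same conjugation of a bounded operator works verbatim on infinite locally finite graphs, so it would unify the two cases that the paper treats separately. One slip needs repair: with your definition $\tau(v)=\sigma_{P_{v_0\rightarrow v}}$, the cycle $v_0\rightarrow u\rightarrow v\rightarrow v_0$ having signature $I$ gives $\sigma_{uv}=\tau(u)^{-1}\tau(v)$, \emph{not} $\tau(u)\tau(v)^{-1}$, so as written the $(u,v)$-block of $T^{-1}A^{\sigma}T$ is $w_{uv}\,\tau(u)^{-1}\sigma_{uv}\tau(v)\neq w_{uv}I_d$. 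Define instead $\tau(v)=\sigma_{P_{v\rightarrow v_0}}$ (these are precisely the blocks implicit in the paper's $g_j$); then $\sigma_{uv}=\tau(u)\tau(v)^{-1}$ and $\sigma_{P_{x\rightarrow y}}=\tau(x)\tau(y)^{-1}$ hold, $T^{-1}\mathcal{L}^{\sigma}T=\mathcal{L}\otimes I_d$, and the $(x,y)$-block of $T(H_t\otimes I_d)T^{-1}$ is $H_t(x,y)\sigma_{P_{x\rightarrow y}}$ as claimed.
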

	
	\textbf{One case: $\Gamma$ is finite}
	
	Here we assume $\Gamma$ is a finite graph with $n$ vertices and $\sigma: E(\Gamma)\rightarrow O(d)$ is balanced. Without loss of generality, suppose $\Gamma$ is connected. Otherwise, replace $\Gamma$ by the connected component of $x$ and $y$ in the proof. Since $(\Gamma,\sigma) $ is consistent, we can construct a special eigensystem of $\mathcal{L}^{\sigma}$. 
	
	Let $0=\lambda_1<\lambda_2\le \cdots\le \lambda_n $
	be the eigenvalues of $\mathcal{L}$. Let $\mu_1\le \mu_2 \cdots \le \mu_{nd}$ be the eigenvalues of $\mathcal{L}^{\sigma}$. According to the spectrum of consistent graph in \cite[Theorem 1]{chung2014ranking}, we have $ \lambda_1=\mu_1=\mu_2=\cdots=\mu_d;\lambda_2=\mu_{d+1}=\cdots=\mu_{2d};\cdots; \lambda_n=\mu_{(n-1)d+1}=\cdots=\mu_{nd}$. Choose $\{f_i:V(\Gamma)\rightarrow \mathbb{R}\}_{i=1,\cdots,n}$ as orthonormal eigenfunctions of normalized standard Laplacian $\mathcal{L}$ with respect to $\{\lambda_i\}_{i=1,\cdots,n}$.  
	
	Choosing a fixed vertex $x_1$, define $\{g_j:V(\Gamma)\rightarrow \mathbb{R}^d\}_{j=1,\cdots,d}$ as follows:
	\begin{equation*}
		\begin{aligned}
			g_j(x_1)&=e_j\\
			g_j(y)&=\sigma_{P_{y\rightarrow x_1}}g_j(x_1)
		\end{aligned}
	\end{equation*}
	where $e_j=[0,\cdots,0,\underbrace{1}_{j-th},0,\cdots,0]^T$ is a unit vector in $\mathbb{R}^d$ , $P_{y\rightarrow x_1} $ is one path from $y$ to $x_1$ and $ \sigma_{P_{y\rightarrow x_1}}$ is the signature of $P_{y\rightarrow x_1} $. 
	
	By $\mathcal{L}^{\sigma}g_j(x)=\frac{1}{deg(x)}\sum_{y\sim x} w_{xy}(g_j(x)-\sigma_{xy}g_j(y))=0$ and $g_i\perp g_j, \forall i\neq j$ , we know $\{g_j\}_{j=1,\cdots,d}$ are orthogonal eigenfunctions of $\mathcal{L}^{\sigma}$ with respect to $\{\mu_i\}_{i=1,\cdots,d}$. 
	
	For $\forall i=1,\cdots,n;j=1,\cdots,d $, define $\Phi_{i,j}$ as follows:
	\[ \Phi_{i,j}:V(\Gamma)\rightarrow \mathbb{R}^d \]
	\[ \Phi_{i,j}(x)=f_i(x)g_j(x) \]
	
	Regard $\Phi_{i,j}$ as a vector in $\mathbb{R}^{nd}$ and $||\Phi_{i,j}||_2=1$. Due to $f_{i_1}\perp f_{i_2}$ if $i_1\neq i_2$ and $g_{j_1}\perp g_{j_2}$ if $j_1\neq j_2$, $\Phi_{i_1,j_1}$ is orthogonal to $\Phi_{i_2,j_2}$ if $(i_1,j_1)\neq (i_2,j_2)$.  
	We claim $\Phi_{i,j}$ is the eigenfunction of $\mathcal{L}^{\sigma}$ with respect to $\mu_{(i-1)d+j}$. In fact, 
	\begin{equation*}
		\begin{aligned}
			\mathcal{L}^{\sigma}\Phi_{i,j}(x)&=\frac{1}{deg(x)}\sum_{y\sim x} w_{xy}(\Phi_{i,j}(x)-\sigma_{xy}\Phi_{i,j}(y))\\
			&=\frac{1}{deg(x)}\sum_{y\sim x} w_{xy}(f_i(x)g_j(x)-\sigma_{xy}f_i(y)g_j(y))\\
			&=\frac{1}{deg(x)}\sum_{y\sim x} w_{xy}(f_i(x)g_j(x)-f_i(y)g_j(x))\\
			&=\frac{1}{deg(x)}\sum_{y\sim x} w_{xy}(f_i(x)-f_i(y)) g_j(x)\\
			&=\mathcal{L}f_i(x)g_j(x)\\
			&=\mu_{(i-1)d+j}f_i(x)g_j(x)\\
			&=\mu_{(i-1)d+j} \Phi_{i,j}(x)
		\end{aligned}
	\end{equation*}
	Therefore, $\{(\Phi_{i,j},\mu_{(i-1)d+j})\}_{i=1,\cdots,n;j=1,\cdots,d}$ is the orthonormal system of $\mathcal{L}^{\sigma}$.

	\begin{proof}[The Proof of Theorem \ref{consisthm}]
		\begin{equation}\label{eq1}
			\begin{aligned}
				H_t^{\sigma}(x,y)&=\sum_{i=1}^{n}\sum_{j=1}^{d}exp(-t\mu_{(i-1)d+j})\Phi_{i,j}(x)\overline{\Phi_{i,j}(y)}^T\\
				&=\left[\sum_{i=1}^{n}exp(-t\lambda_i) f_i(x) f_i(y)\right] \left[\sum_{j=1}^{d}	g_j(x)\overline{g_j(y)}^T\right] \\
				&=H_t(x,y)\left[\sum_{j=1}^{d}	g_j(x)\overline{g_j(y)}^T\right] 
			\end{aligned}
		\end{equation}
		Each $g_i:V(\Gamma)\rightarrow \mathbb{R}^d$ can be thought of as a vector in $\mathbb{R}^{nd} $ and let $S:=\sum_{j=1}^{d}	g_j\overline{g_j}^T$. Then $S$ is a $nd\times nd$ matrix and each block $S(x,y)= \sum_{j=1}^{d}	g_j(x)\overline{g_j(y)}^T$ is a $d\times d$ matrix. Let $P_{x\rightarrow x_1}$ be a path from $x $ to $x_1$.  Let $P_{y\rightarrow x_1}$ be a path from $y $ to $x_1$.
		\begin{equation}\label{eq2}
			\begin{aligned}
				S(x,y)&=\sum_{j=1}^{d}\sigma_{P_{x\rightarrow x_1}}g_j(x_1)\left(\overline{\sigma_{P_{y\rightarrow x_1}}g_j(x_1)}\right)^T\\
				&=\sum_{j=1}^{d}\sigma_{P_{x\rightarrow x_1}}e_je_j^T\sigma_{P_{x_1\rightarrow y}}\\
				&=\sigma_{P_{x\rightarrow x_1}}(\sum_{j=1}^{d}e_je_j^T)\sigma_{P_{x_1\rightarrow y}}\\
				&=\sigma_{P_{x\rightarrow x_1}}\sigma_{P_{x_1\rightarrow y}}\\
				&=\sigma_{P_{x\rightarrow y}}
			\end{aligned}
		\end{equation}
		Put the expression (\ref{eq2}) into (\ref{eq1}),we get 
		\[ H_t^{\sigma}(x,y)=H_t(x,y)\sigma_{P_{x\rightarrow y}} \]
	\end{proof}
	
	\textbf{The other case: $\Gamma$ may be infinite}
	
	Here we assume $\Gamma$ is a locally finite graph and $\sigma: E(\Gamma)\rightarrow O(d)$ is balanced.
	It's obvious that $\mathcal{L}^k(x,y)$ is the sum of weights of all walks of length $k$ from $x$ to $y$ for any $k\in \mathbb{N}$ and vertices $ x,y$. Since $\sigma$ is balanced, all walk from $x$ to $y$ have same signature $\sigma_{P_{x\rightarrow y}}$, implying $(\mathcal{L}^{\sigma})^k(x,y)=\mathcal{L}^k(x,y)\sigma_{P_{x\rightarrow y}}$.
	
	\begin{proof}[The Proof of Theorem \ref{consisthm}]
		First, we do the Taylor expansion on $H_t^{\sigma}$	.
		\begin{equation*}
			\begin{aligned}
				H_t^{\sigma}(x,y)&=e^{-t\mathcal{L}^{\sigma}}(x,y)\\
				&=\left[I-t\mathcal{L}^{\sigma}+\frac{1}{2!}t^2(\mathcal{L}^{\sigma})^2+\cdots+\frac{1}{k!}(-t)^k(\mathcal{L}^{\sigma})^k+\cdots\right](x,y)
			\end{aligned}
		\end{equation*}
		Due to $(\mathcal{L}^{\sigma})^k(x,y)=\mathcal{L}^k(x,y)\sigma_{P_{x\rightarrow y}}$, we have
		\begin{equation*}
			\begin{aligned}
				H_t^{\sigma}(x,y)&=	\left[I-t\mathcal{L}+\frac{1}{2!}t^2(\mathcal{L})^2+\cdots+\frac{1}{k!}(-t)^k(\mathcal{L})^k+\cdots\right](x,y)\sigma_{P_{x\rightarrow y}}\\
				&=e^{-t\mathcal{L}}(x,y)\sigma_{P_{x\rightarrow y}}\\
				&=H_t(x,y)\sigma_{P_{x\rightarrow y}}
			\end{aligned}
		\end{equation*}
	\end{proof}
	
	\section{The Connection Heat Kernel On Connection Discrete Torus}
	
	Let $M$ be an integer $n\times n$ matrix with $det M>1$. We consider $M\mathbb{Z}^n$ as an additive group acting on $\mathbb{Z}^n$, then $\mathbb{Z}^n/M\mathbb{Z}^n $ is a quotient group of $\mathbb{Z}^n$. Now we regard $\mathbb{Z}^n/M\mathbb{Z}^n $ as the quotient graph of $\mathbb{Z}^n$, called discrete torus. It is a finite graph with $|V(\mathbb{Z}^n/M\mathbb{Z}^n)|=det M$. 
	
	Given a connection graph and a proper group action on it, we introduce the concept of quotient connection graph in this section. Then we define connection discrete torus as the quotient of connection lattice $(\mathbb{Z}^n,\sigma)$ and additive group action $M\mathbb{Z}^n$. Furthermore, we investigate the relation between the connection heat kernel on any connection graph and connection heat kernel on its quotient connection graph, from which we acquire the expression of connection heat kernel on connection discrete torus.
	
	\begin{definition}
		$g$ is called an automorphism on connection graph $(\Gamma,\sigma)$, if the following are satisfied:
		\begin{itemize}
			\item $gx\sim gy $ iff $x\sim y ,\forall x,y\in V(\Gamma)$.
			\item $w_{gx,gy}=w_{x,y}, \forall x,y\in V(\Gamma)$ .
			\item $\sigma_{gx,gy}=\sigma_{xy}, \forall x,y\in V(\Gamma)$ .
		\end{itemize}
		The set of all the automorphisms on $(\Gamma,\sigma)$ is denoted by $Aut(\Gamma)$.
	\end{definition}
	
	\begin{remark}
		The degree of $x$ is invariant under automorphism because $d(x)=\sum_{y\sim x} w_{xy}=\sum_{y\sim x} w_{gx,gy}=\sum_{z\sim gx} w_{gx,z}=d(gx)$.
	\end{remark}
	
	\begin{definition}
		Given a group $G$ acting on the connection graph $(\Gamma,\sigma)$, we say the connection $\sigma$ is $G-$proper if the following is satisfied:
		\[ \forall [x]\neq [y] , \forall v\in [x],\forall w\in [y], \sigma_{vw}=\sigma_{xy}\]
		where $[x]=\{gx|g\in G\},[y]=\{gy|g\in G\}$ are the equivalent classes of vertices under the action of $G$.
	\end{definition}
	
	\begin{definition}
		Suppose $(\Gamma,\sigma) $ is a connection graph and $G$ is a subgroup of $Aut(\Gamma)$. If $\sigma$ is $G-$proper, then we can define a connection quotient graph $(\Gamma /G,\tilde{w},\sigma^{Q_G})$ as follows:
		\begin{itemize}
			\item The vertices in $\Gamma /G $ are the equivalent classes $[x]$ under $G$, where $[x]:=\{y|\exists g\in G,y=gx\}$.
			\item $[x]\sim [y]$ if and only if $\exists v\in [x], \exists w\in [y], v\sim w$ and $[x]\cap [y]=\emptyset$.
			\item  $\tilde{w}_{[x],[y]}:=\sum_{g\in G} w_{x,gy},\forall [x]\sim [y]$.
			\item $\sigma^{Q_G}_{[x],[y]}:=\sigma_{xy}, \forall [x]\sim [y]$.
		\end{itemize}
		We call $\sigma^{Q_G}$ as the quotient connection with respect to the group $G$.
	\end{definition}
	
	In our paper, we say $(\mathbb{Z}^n/M\mathbb{Z}^n,\beta)$ is a connection discrete torus if there exists a $M\mathbb{Z}^n-$proper connection $\sigma$ on $\mathbb{Z}^n$ such that $\beta=\sigma^{Q_{M\mathbb{Z}^n}}$. In other words, we refer to connection discrete torus as a quotient of a connection lattice $(\mathbb{Z}^n,\sigma)$.
	
	Before acquiring the expression of connection heat kernel on $(\mathbb{Z}^n/M\mathbb{Z}^n,\sigma^{Q_{M\mathbb{Z}^n}})$, we study how a group action on connection graph affects its connection heat kernel firstly, which is shown in next lemma.
	
	\begin{lemma}
		The connection heat kernel $H_t^{\sigma}$ on $(\Gamma,\sigma :E(\Gamma)\rightarrow O(d)) $ is invariant under $Aut(\Gamma)$:
		\[ H_t^{\sigma}(x,y)=H_t^{\sigma}(gx,gy), \forall x,y\in V(\Gamma), \forall g \in Aut(\Gamma) \]
	\end{lemma}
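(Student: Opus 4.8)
The plan is to reduce the statement to the Taylor expansion of the exponential, exactly as in the infinite-graph case of Theorem \ref{consisthm}. Writing $H_t^{\sigma}=e^{-t\mathcal{L}^{\sigma}}=\sum_{k\ge 0}\frac{(-t)^k}{k!}(\mathcal{L}^{\sigma})^k$, one gets blockwise $H_t^{\sigma}(x,y)=\sum_{k\ge 0}\frac{(-t)^k}{k!}(\mathcal{L}^{\sigma})^k(x,y)$, and the same expression with $(x,y)$ replaced by $(gx,gy)$. Hence it suffices to prove that $(\mathcal{L}^{\sigma})^k(x,y)=(\mathcal{L}^{\sigma})^k(gx,gy)$ as $d\times d$ matrices for every $k\ge 0$ and every $g\in\mathrm{Aut}(\Gamma)$.

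For $k=0$ both sides equal $I_{d\times d}$. For $k=1$, the block $\mathcal{L}^{\sigma}(x,y)$ equals $I_{d\times d}$ when $x=y$, equals $-\frac{w_{xy}}{\sqrt{d(x)d(y)}}\sigma_{xy}$ when $x\sim y$, and $0$ otherwise; since $g$ preserves adjacency, edge weights, the connection matrices $\sigma_{xy}$, and (by the Remark just after the definition of automorphism) vertex degrees, replacing $(x,y)$ by $(gx,gy)$ leaves every one of these unchanged, so $\mathcal{L}^{\sigma}(gx,gy)=\mathcal{L}^{\sigma}(x,y)$. For general $k$ I would expand
\[
(\mathcal{L}^{\sigma})^k(x,y)=\sum_{z_1,\dots,z_{k-1}\in V(\Gamma)}\mathcal{L}^{\sigma}(x,z_1)\mathcal{L}^{\sigma}(z_1,z_2)\cdots\mathcal{L}^{\sigma}(z_{k-1},y),
\]
which for fixed $x,y$ is a finite sum because $\Gamma$ is locally finite (only length-$k$ walks contribute), so the product of these infinite block matrices is unambiguous, as noted in Section 3. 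Reindexing the dummy vertices by the bijection $z_i\mapsto gz_i$ of $V(\Gamma)$ and applying the $k=1$ identity to each factor gives
\begin{align*}
(\mathcal{L}^{\sigma})^k(gx,gy)&=\sum_{z_1,\dots,z_{k-1}}\mathcal{L}^{\sigma}(gx,gz_1)\cdots\mathcal{L}^{\sigma}(gz_{k-1},gy)\\
&=\sum_{z_1,\dots,z_{k-1}}\mathcal{L}^{\sigma}(x,z_1)\cdots\mathcal{L}^{\sigma}(z_{k-1},y)=(\mathcal{L}^{\sigma})^k(x,y).
\end{align*}
Summing against $(-t)^k/k!$ then yields $H_t^{\sigma}(x,y)=H_t^{\sigma}(gx,gy)$.

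The one place that deserves care is the infinite case, so that is where I would focus: one must make sure the powers $(\mathcal{L}^{\sigma})^k$ are unambiguously defined and that the exponential series may be handled blockwise. Both follow from local finiteness — each block of $(\mathcal{L}^{\sigma})^k$ is a finite walk sum, so the associativity and distributivity pathologies flagged in Section 3 do not arise — together with the boundedness of $\mathcal{L}^{\sigma}$ ($\|\mathcal{L}^{\sigma}\|\le 2$) recorded in Section 2; in the finite case there is nothing to check. An alternative, cleaner-looking route is operator-theoretic: let $U_g$ be the orthogonal operator on $C((\Gamma,\sigma),\mathbb{R}^d)$ given by $(U_gf)(x)=f(g^{-1}x)$, verify directly from the automorphism axioms that $U_g\mathcal{L}^{\sigma}=\mathcal{L}^{\sigma}U_g$, conclude $U_g e^{-t\mathcal{L}^{\sigma}}=e^{-t\mathcal{L}^{\sigma}}U_g$, and translate this operator identity into the claimed equality of kernel blocks. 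I would keep the blockwise computation as the main proof, since it mirrors the infinite-case argument for Theorem \ref{consisthm}, and record the operator version as a remark.
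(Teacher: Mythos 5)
Your argument is correct, but your main route differs from the paper's. The paper proves the intertwining identity $\mathcal{L}^{\sigma}(f\circ g)|_{x}=\mathcal{L}^{\sigma}f|_{gx}$ for all $f$ and then applies it to the coordinate delta functions $\delta_{gy}^{i}$, reading off the columns of the relevant blocks --- this is precisely the operator-theoretic route you relegate to a closing remark (the paper even states the conclusion somewhat loosely, passing from the invariance of the blocks of $\mathcal{L}^{\sigma}$ to that of $H_t^{\sigma}$ without spelling out that the intertwining propagates through powers and the exponential series). Your primary argument instead works blockwise: you verify $k=1$ invariance directly from the automorphism axioms and the degree-invariance remark, then expand $(\mathcal{L}^{\sigma})^k(x,y)$ as a sum over intermediate vertices and reindex by the bijection $z_i\mapsto gz_i$. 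This is more explicit about exactly the point the paper glosses over, and your attention to local finiteness and to $\|\mathcal{L}^{\sigma}\|\le 2$ addresses the convergence issues for infinite $\Gamma$ that the paper does not mention in this proof. One small imprecision: since the diagonal blocks of $\mathcal{L}^{\sigma}$ equal $I_{d\times d}$, the nonzero terms in your expansion correspond to lazy walks (consecutive vertices equal or adjacent) rather than genuine length-$k$ walks; this does not affect finiteness of the sum for a locally finite graph, so the argument stands. Either route is acceptable; yours buys explicitness at the block level, the paper's buys brevity by working with the pullback action on functions.
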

	
	\begin{proof}
		First we claim that:
		
		For $\forall g\in Aut(\Gamma), \forall f\in C((\Gamma,\sigma),\mathbb{R}^d)$,
		$ \mathcal{L}^{\sigma}(f\circ g)|_{x}=\mathcal{L}^{\sigma} f|_{gx} $.
		
		In fact:
		\begin{equation*}
			\begin{aligned}
				\mathcal{L}^{\sigma}(f\circ g)|_{x}&=f(gx)-\frac{1}{d(x)}\sum_{y\sim z}w_{xy}\sigma_{xy}f(gy)\\
				&=f(gx)-\frac{1}{d(gx)}\sum_{y\sim x}w_{gx,gy}\sigma_{gx,gy}f(gy)\\
				&=f(gx)-\frac{1}{d(gx)}\sum_{z\sim gx}w_{gx,z}\sigma_{gx,z}f(z)\\
				&=\mathcal{L}^{\sigma} f|_{gx}
			\end{aligned}
		\end{equation*}
		
		For $1\le i\le d $, define $\delta_y^{i}: V(\Gamma)\rightarrow \mathbb{R}^d$ as follows:
		\begin{equation*}
			\delta_y^{i}(x)=\left\{	\begin{aligned}
				e_{i}& \quad  x=y\\
				\vec{0}&\quad else
			\end{aligned}\right.
		\end{equation*}
		where $e_{i} $ is a unit vector in $\mathbb{R}^d$ and its $i$th entry equals $1$.
		
		Using the above claim,
		\begin{equation*}
			\begin{aligned}
				\mathcal{L}^{\sigma}\delta_{gy}^{i}|_{gx}&=\mathcal{L}^{\sigma}(\delta_{gy}^{i}\circ g)|_{x}\\
				&=\mathcal{L}^{\sigma}\delta_{y}^{i}|_{x}
			\end{aligned}
		\end{equation*}
		Since $ \mathcal{L}^{\sigma}\delta_{gy}^{i}|_{gx}$ is the $i$th column of $H_t^{\sigma}(gx,gy) $, $\mathcal{L}^{\sigma}\delta_{y}^{i}|_{x} $ is the $i$th column of $H_t^{\sigma}(x,y) $ and $i$ takes the integers from $1$ to $d$, we know $H_t^{\sigma}(gx,gy)=H_t^{\sigma}(x,y) $ .
	\end{proof}
	
	We say a function $f\in C((\Gamma,\sigma),\mathbb{R}^d)$ is $G-$periodic if $ f(gx)=f(x),\forall x\in V(\Gamma),\forall g\in G$. A $G-$periodic function on $(\Gamma,\sigma)$ can be regarded as a function on $(\Gamma/G,\sigma^{Q_G})$ while a function on $(\Gamma/G,\sigma^{Q_G})$ can be extended to a $G-$periodic function on $(\Gamma,\sigma)$.
	
	\begin{lemma}\label{solutionlemma}
		If $f$ is $G-$periodic on $(\Gamma,\sigma)$, then 
		\[ \mathcal{L}^{\sigma}f(x)=\mathcal{L}^{\sigma^{Q_G}}f([x]) \]
	\end{lemma}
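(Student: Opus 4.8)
The plan is to prove the identity pointwise by expanding both sides through the explicit action of the normalized connection Laplacian and then matching terms after partitioning the neighbours of $x$ into $G$-orbits. Recall that for any $f$ the action reads
$$\mathcal{L}^{\sigma}f(x)=f(x)-\frac{1}{d(x)}\sum_{y\sim x}w_{xy}\,\sigma_{xy}\,f(y),$$
and the same formula applied to the quotient graph $(\Gamma/G,\tilde{w},\sigma^{Q_G})$ gives
$$\mathcal{L}^{\sigma^{Q_G}}f([x])=f([x])-\frac{1}{\tilde{d}([x])}\sum_{[y]\sim[x]}\tilde{w}_{[x],[y]}\,\sigma^{Q_G}_{[x],[y]}\,f([y]),$$
where $\tilde{d}([x])=\sum_{[y]\sim[x]}\tilde{w}_{[x],[y]}$ is the degree of $[x]$ in the quotient. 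Since $f$ is $G$-periodic it descends to a function on $\Gamma/G$ with $f(x)=f([x])$, so the first terms on the two sides already coincide; it remains to show that the two weighted sums, together with their normalizing denominators, agree.

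First I would fix, for each class $[y]$ with $[y]\sim[x]$, a representative $y$, and observe that the neighbours of $x$ lying in $[y]$ are precisely the vertices $gy$ with $g\in G$ and $gy\sim x$. Splitting $\sum_{y\sim x}$ along these orbits,
$$\sum_{y\sim x}w_{xy}\,\sigma_{xy}\,f(y)=\sum_{[y]\sim[x]}\ \sum_{\substack{g\in G\\ gy\sim x}}w_{x,gy}\,\sigma_{x,gy}\,f(gy).$$
Now the two structural hypotheses enter: because $\sigma$ is $G$-proper and $[x]\ne[y]$, we have $\sigma_{x,gy}=\sigma_{xy}=\sigma^{Q_G}_{[x],[y]}$ for every such $g$, and because $f$ is $G$-periodic, $f(gy)=f(y)=f([y])$. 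Pulling these orbit-constant factors out of the inner sum reduces it to $\sum_{g\in G}w_{x,gy}$, with the convention $w_{x,gy}=0$ whenever $x\not\sim gy$, which is exactly the quotient weight $\tilde{w}_{[x],[y]}$ by definition. Hence the weighted sum for $\mathcal{L}^{\sigma}$ equals $\sum_{[y]\sim[x]}\tilde{w}_{[x],[y]}\,\sigma^{Q_G}_{[x],[y]}\,f([y])$, matching the numerator on the quotient side.

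The last step is to run the identical orbit decomposition on the degree, $d(x)=\sum_{y\sim x}w_{xy}=\sum_{[y]\sim[x]}\sum_{g\in G}w_{x,gy}=\sum_{[y]\sim[x]}\tilde{w}_{[x],[y]}=\tilde{d}([x])$, so that the normalizing factors also match and the two displayed expressions become literally identical. The main obstacle I anticipate is the bookkeeping hidden in this orbit decomposition rather than the algebra: one needs that distinct $g\in G$ send the representative $y$ to distinct vertices, so that $\sum_{g\in G}w_{x,gy}$ does not overcount — automatic when $G$ acts freely, as the translation action of $M\mathbb{Z}^n$ on $\mathbb{Z}^n$ does — and that no vertex of $\Gamma$ is adjacent to a nontrivial $G$-translate of itself, so that every edge at $x$ really is captured by a quotient edge $[x]\sim[y]$ with $[x]\cap[y]=\emptyset$. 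Making these two points precise, or isolating them as the standing hypotheses under which the connection quotient graph, and in particular the connection discrete torus, is defined, is where the care lies; granted them, the degree identity and hence the lemma are immediate.
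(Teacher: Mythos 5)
Your proof is correct and is essentially the paper's own argument run in the opposite direction: the paper starts from $\mathcal{L}^{\sigma^{Q_G}}f([x])$, expands $\tilde{w}_{[x],[y]}=\sum_{g\in G}w_{x,gy}$, and uses $G$-properness together with $G$-periodicity to reassemble the sum over all neighbours of $x$ --- exactly your orbit decomposition read backwards, with the degree identity $d(x)=\tilde{d}([x])$ used implicitly. Your closing remarks about freeness of the action and about vertices adjacent to their own $G$-translates flag genuine implicit hypotheses that the paper does not spell out, but they do not alter the substance of the argument.
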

	
	\begin{proof}
		\begin{equation*}
			\begin{aligned}
				\mathcal{L}^{\sigma^{Q_G}}f([x])&=f([x])-\frac{1}{d([x])}\sum_{[y]\in \Gamma/G}w_{[x],[y]}\sigma_{[x],[y]}^{G}f([y])\\
				&=f(x)-\frac{1}{d(x)}\sum_{[y]\in \Gamma/G}\left(\sum_{g\in G}w_{x,gy}\right)\sigma_{xy}f(y)\\
				&=f(x)-\frac{1}{d(x)}\sum_{[y]\in \Gamma/G}\left(\sum_{g\in G}w_{x,gy}\right)\sigma_{x,gy}f(gy)\\
				&=f(x)-\frac{1}{d(x)}\sum_{z\in \Gamma}w_{xz}\sigma_{xz}f(z)\\
				&=\mathcal{L}^{\sigma}f(x)
			\end{aligned}
		\end{equation*}
	\end{proof}
	
	We derive the expression of connection heat kernel on a quotient connection graph in the following theorem.
	
	\begin{theorem}\label{sumheat}
		If $\sigma$ is $G-$proper, then the connection heat kernel on  $(\Gamma,\sigma) $ and the connection heat kernel on $(\Gamma/G,\sigma^{Q_G})$ have the following relation:
		\[ H_t^{\sigma^{Q_G}}([x],[y])=\sum_{g\in G} H_t^{\sigma}(x,gy) \]
	\end{theorem}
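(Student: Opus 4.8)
The plan is to read the claimed identity one column at a time and to recognise both sides as the solution of one and the same connection heat equation on $(\Gamma/G,\sigma^{Q_G})$, the equality then being forced by uniqueness of the heat semigroup. Fix a vertex $y$ and an index $1\le i\le d$, and, with $\delta_y^i$ as in the proof of the $Aut(\Gamma)$-invariance lemma above, set
\[ v_i(t,x):=\sum_{g\in G}\bigl(e^{-t\mathcal{L}^{\sigma}}\delta_{gy}^i\bigr)(x)=\sum_{g\in G}H_t^{\sigma}(x,gy)\,e_i , \]
so that $v_i(t,\cdot)$ is exactly the $i$-th column of the matrix-valued map $x\mapsto\sum_{g\in G}H_t^{\sigma}(x,gy)$. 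When $G$ is finite this is a finite sum; in the discrete torus case $\Gamma=\mathbb{Z}^n$, $G=M\mathbb{Z}^n$ with $\Gamma/G$ finite, it converges absolutely and uniformly for $t$ in compact intervals, owing to the rapid decay in the displacement of the one-dimensional heat-kernel entries in Theorem~\ref{Zexpre}.

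First I would check that $v_i(t,\cdot)$ is $G$-periodic. For $h\in G$ one has $h,h^{-1}\in Aut(\Gamma)$, so the $Aut(\Gamma)$-invariance of the connection heat kernel gives $H_t^{\sigma}(hx,gy)=H_t^{\sigma}(x,h^{-1}gy)$; reindexing the sum by $g'=h^{-1}g$, which again ranges over all of $G$, yields $v_i(t,hx)=\sum_{g'\in G}H_t^{\sigma}(x,g'y)\,e_i=v_i(t,x)$. Hence $v_i(t,\cdot)$ descends to a well-defined function $\bar v_i(t,\cdot)$ on $\Gamma/G$.

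Next I would show that $\bar v_i$ solves the connection heat equation on $(\Gamma/G,\sigma^{Q_G})$. Each summand $e^{-t\mathcal{L}^{\sigma}}\delta_{gy}^i$ satisfies $(\tfrac{\partial}{\partial t}+\mathcal{L}^{\sigma})u=0$, and since $\mathcal{L}^{\sigma}$ is a bounded operator, both $\tfrac{\partial}{\partial t}$ and $\mathcal{L}^{\sigma}$ may be applied termwise, so $v_i$ satisfies the same equation. As $v_i(t,\cdot)$ is $G$-periodic, Lemma~\ref{solutionlemma} gives $\mathcal{L}^{\sigma}v_i(t,x)=\mathcal{L}^{\sigma^{Q_G}}\bar v_i(t,[x])$, and therefore
\[ \Bigl(\tfrac{\partial}{\partial t}+\mathcal{L}^{\sigma^{Q_G}}\Bigr)\bar v_i(t,[x])=0 . \]
For the initial datum, $v_i(0,x)=\sum_{g\in G}\delta_{gy}^i(x)$ equals $e_i$ when $x$ lies in the orbit $[y]$ and $\vec{0}$ otherwise --- here one uses that distinct elements of $G$ carry $y$ to distinct vertices, exactly as the translation action of $M\mathbb{Z}^n$ on $\mathbb{Z}^n$ does --- so $\bar v_i(0,\cdot)=\delta_{[y]}^i$ on $\Gamma/G$. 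Since $\mathcal{L}^{\sigma^{Q_G}}$ is again a bounded operator, the Cauchy problem on $(\Gamma/G,\sigma^{Q_G})$ has a unique solution, namely $\bigl(e^{-t\mathcal{L}^{\sigma^{Q_G}}}\delta_{[y]}^i\bigr)([x])=H_t^{\sigma^{Q_G}}([x],[y])\,e_i$; comparing this with $\bar v_i$ and letting $i$ run over $1,\dots,d$ identifies all $d$ columns, which is precisely $H_t^{\sigma^{Q_G}}([x],[y])=\sum_{g\in G}H_t^{\sigma}(x,gy)$.

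I expect the main obstacle to be the analytic bookkeeping around the (possibly infinite) sum over $G$: certifying that $v_i$ lies in a function space on which uniqueness of the heat semigroup is available, and that $\sum_{g\in G}$ commutes with $\tfrac{\partial}{\partial t}$ and with $\mathcal{L}^{\sigma}$. For the connection discrete torus this is harmless, since $\Gamma/G$ is finite and the lattice heat-kernel entries of Theorem~\ref{Zexpre} decay super-exponentially in the displacement; in the statement in full generality one would either restrict to this situation or impose a summability hypothesis, after which every remaining step is purely formal.
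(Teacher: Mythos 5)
Your proposal is correct and follows essentially the same route as the paper: descend a $G$-periodic solution of the connection heat equation on $(\Gamma,\sigma)$ to $(\Gamma/G,\sigma^{Q_G})$ via Lemma~\ref{solutionlemma} and invoke uniqueness of the quotient heat semigroup. You are in fact somewhat more careful than the paper's own proof, which does not explicitly verify the $G$-periodicity of the evolved solution, the injectivity of $g\mapsto gy$ needed for the initial datum, or the convergence of the sum over an infinite $G$.
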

	
	\begin{proof}
		From the above lemma \ref{solutionlemma}, we know that a $G-$periodic solution of the heat equation with the $G-$periodic initial condition on $(\Gamma,\sigma)$ is the unique solution of heat equation on $(\Gamma/G,\sigma^{Q_G})$. 
		
		Suppose $f$ is the initial condition on $(\Gamma/G,\sigma^{Q_G})$, which can be regarded as a $G-$periodic initial condition on $(\Gamma,\sigma)$, then the solution of the heat equation on $(\Gamma,\sigma)$ is
		\begin{equation*}
			\begin{aligned}
				u(x,t)&=\sum_{z\in V(\Gamma)}H_t^{\sigma}(x,z)f(z)\\
				&=\sum_{[y]\in V(\Gamma/G)}\sum_{g\in G}H_t^{\sigma}(x,gy)f(gy)\\
				&=\sum_{[y]\in V(\Gamma/G)}\left(\sum_{g\in G}H_t^{\sigma}(x,gy)\right)f([y])
			\end{aligned}
		\end{equation*}
		Then $H_t^{\sigma^{Q_G}}([x],[y])=\sum_{g\in G} H_t^{\sigma}(x,gy) $.
	\end{proof}
	
	Just from theorem \ref{equZ} and theorem \ref{sumheat}, we can acquire the follwing theorem describing expression of connection heat kernel on connection discrete torus.
	\begin{theorem}\label{torus 1}
		Suppose $(\mathbb{Z}^n,\hat{\sigma})$ is product connection graph where $\hat{\sigma}$ is $M\mathbb{Z}^n-$proper and $\hat{\sigma}=\sigma^{(1)}\otimes \cdots\otimes\sigma^{(n)}$. Then  connection heat kernel of $(\mathbb{Z}^n/M\mathbb{Z}^n,\hat{\sigma}^{Q_{M\mathbb{Z}^n}})$ is:
		\[ 
		\begin{aligned}
			&H_t^{\hat{\sigma}^{Q_{M\mathbb{Z}^n}}}([x],[y])\\
			&=\sum_{a\in M\mathbb{Z}^n}\prod_{i=1}^{n}\left((-1)^{|y_i+a_i-x_i|} \sum_{k\ge 0}\frac{C_{2k}^{k+|y_i+a_i-x_i|}}{k!} (-\frac{t}{2n})^k\right)\\ &\sigma^{(1)}_{P_{x_1\rightarrow y_1+a_1}}\otimes \sigma^{(2)}_{P_{x_2\rightarrow y_2+a_2}}\otimes\cdots\otimes \sigma^{(n)}_{P_{x_n\rightarrow y_n+a_n}}
		\end{aligned}
		\]
	\end{theorem}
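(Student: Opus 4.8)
The plan is to obtain this theorem as an immediate corollary of Theorem~\ref{sumheat} (the relation between the connection heat kernel on a connection graph and on its quotient) together with Theorem~\ref{equZ} (the closed form of the connection heat kernel on $(\mathbb{Z}^n,\hat{\sigma})$). So the work is essentially bookkeeping rather than new analysis.

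First I would take $\Gamma=\mathbb{Z}^n$ equipped with the product connection $\hat{\sigma}=\sigma^{(1)}\otimes\cdots\otimes\sigma^{(n)}$, and let $G=M\mathbb{Z}^n$ act on $\mathbb{Z}^n$ by translations $x\mapsto x+a$, $a\in M\mathbb{Z}^n$. I would check that $G$ is a subgroup of $Aut(\mathbb{Z}^n)$ in the connection-graph sense: a translation preserves adjacency (adjacent vertices differ by a standard basis vector, which is unchanged under a shift), it preserves the edge weights (all equal to $1$), and it preserves $\hat{\sigma}$ precisely because $\hat{\sigma}$ is assumed $M\mathbb{Z}^n$-proper, which gives $\hat{\sigma}_{x+a,y+a}=\hat{\sigma}_{xy}$ whenever $x\sim y$. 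One should also record that $\hat{\sigma}^{Q_{M\mathbb{Z}^n}}$ is then a well-defined connection on the quotient graph $\mathbb{Z}^n/M\mathbb{Z}^n$, which again follows from $M\mathbb{Z}^n$-properness; this is implicit in the statement and needs only a remark. With the hypotheses of Theorem~\ref{sumheat} verified, that theorem applies verbatim and yields
\[
 H_t^{\hat{\sigma}^{Q_{M\mathbb{Z}^n}}}([x],[y])=\sum_{a\in M\mathbb{Z}^n}H_t^{\mathbb{Z}^n,\hat{\sigma}}(x,y+a).
\]

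Next I would substitute into each summand the closed form from Theorem~\ref{equZ}, applied to the pair of vertices $x=(x_1,\dots,x_n)$ and $y+a=(y_1+a_1,\dots,y_n+a_n)$, i.e.\ with the displacement in the $i$-th coordinate equal to $y_i+a_i-x_i$. This rewrites $H_t^{\mathbb{Z}^n,\hat{\sigma}}(x,y+a)$ as
\[
 \prod_{i=1}^{n}\left((-1)^{|y_i+a_i-x_i|}\sum_{k\ge 0}\frac{C_{2k}^{k+|y_i+a_i-x_i|}}{k!}\left(-\frac{t}{2n}\right)^k\right)\,\sigma^{(1)}_{P_{x_1\rightarrow y_1+a_1}}\otimes\cdots\otimes\sigma^{(n)}_{P_{x_n\rightarrow y_n+a_n}},
\]
where each $\sigma^{(i)}_{P_{x_i\rightarrow y_i+a_i}}$ is the signature of the unique path from $x_i$ to $y_i+a_i$ in $(\mathbb{Z},\sigma^{(i)})$. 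Summing this over $a\in M\mathbb{Z}^n$ is exactly the asserted formula, which completes the proof.

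Since both ingredients are already established, there is no genuine obstacle. The only points that require care are (i) confirming that the translation action of $M\mathbb{Z}^n$ consists of connection-graph automorphisms, which is where the $M\mathbb{Z}^n$-properness hypothesis is actually used, and (ii) matching the index shift correctly, so that in the $i$-th coordinate both the scalar factor and the signature are evaluated at the displacement $y_i+a_i-x_i$ and not at $a_i$.
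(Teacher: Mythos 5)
Your proposal is correct and follows exactly the route the paper takes: the paper derives Theorem~\ref{torus 1} directly by combining Theorem~\ref{sumheat} (giving $H_t^{\hat{\sigma}^{Q_{M\mathbb{Z}^n}}}([x],[y])=\sum_{a\in M\mathbb{Z}^n}H_t^{\mathbb{Z}^n,\hat{\sigma}}(x,y+a)$) with the closed form of Theorem~\ref{equZ}. You have simply made explicit the bookkeeping (verifying the translation action lies in $Aut(\mathbb{Z}^n)$ and matching the displacement $y_i+a_i-x_i$) that the paper leaves implicit.
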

	
	\section{An Equation on Connection Heat Kernel of Discrete Torus}
	In the preceding section, we have derived expressions for the connection heat kernel on connection discrete torus. However, we may directly compute the block entry of the connection heat kernel from its characteristic representation if it is easy to ascertain the spectrum of the normalized connection Laplacian of connection discrete torus.
	From lemma \ref{solutionlemma}, a $M\mathbb{Z}^n-$periodic eigenfunction of normalized connection Laplacian $\mathcal{L}^{\sigma}$ on $(\mathbb{Z}^n,\hat{\sigma})$ is an eigenfuntion of normalized connection Laplacian $\mathcal{L}^{\hat{\sigma}^{Q_{M\mathbb{Z}^n}}}$ on $(\mathbb{Z}^n/M\mathbb{Z}^n,\hat{\sigma}^{Q_{M\mathbb{Z}^n}})$. Therefore, we seek  $M\mathbb{Z}^n-$periodic eigenfunctions of $\mathcal{L}^{\sigma}$ on $(\mathbb{Z}^n,\hat{\sigma})$ firstly.
	
	We say a connection $\sigma:E(\Gamma)\rightarrow O(d)$ is a constant connection if  $\sigma(E)=\{\sigma_1,\sigma_1^{-1}\}$ where $\sigma_1$ is a matrix in $O(d)$. We say the product connection $\hat{\sigma}:=\sigma^{(1)}\otimes \cdots\otimes\sigma^{(n)}$ is a constant product connection if each component connection $\sigma^{(i)}$ is a constant connection. In fact, when the connection on $\mathbb{Z}^n$ is a constant connection, it's not difficult to obtain the spectrum of normalized connection Laplacian.   
\begin{lemma}
	Suppose $(\mathbb{Z},\sigma_1:E(\mathbb{Z})\rightarrow O(d))$ is connection graph and $\sigma_1$ is a constant connection. Let $\{(\lambda_k,v_k)\}_{k=1}^{d} $ be the orthonormal eigensystem of $\sigma_1$. $\forall w \in \mathbb{R}, 1\le k\le d$, define $f_{w,k}:V(\mathbb{Z})\rightarrow \mathbb{R}^d$ as $f_{w,k}(x)=e^{2\pi iwx }v_k$. Then $f_{w,k} $ is an eigenfunction of $\mathcal{L}^{\sigma_1}$ with respect to $1-\frac{1}{2}\lambda_k^{-1} e^{-2\pi iw }-\frac{1}{2}\lambda_k e^{2\pi iw } $.
\end{lemma}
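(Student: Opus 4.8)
The plan is to verify the claim by a direct pointwise computation, in the same spirit as the proof of Theorem~\ref{Zexpre}. First I would write down the action of $\mathcal{L}^{\sigma_1}$ on $(\mathbb{Z},\sigma_1)$: since every edge has weight $1$ and every vertex has degree $2$,
\[ \mathcal{L}^{\sigma_1}f(x)=f(x)-\tfrac12\,\sigma_{x,x-1}f(x-1)-\tfrac12\,\sigma_{x,x+1}f(x+1) \]
for any $f:V(\mathbb{Z})\to\mathbb{C}^d$, reading off the block-triangular form of $\mathcal{L}^{\sigma_1}$ displayed before Theorem~\ref{Zexpre}. Because $\sigma_1$ is a constant connection, along the standard orientation of $\mathbb{Z}$ we have $\sigma_{x,x+1}=\sigma_1$, hence $\sigma_{x,x-1}=\sigma_1^{-1}$, for every $x$; choosing the opposite orientation only substitutes $-w$ for $w$ in what follows and changes nothing, as $w$ ranges over all of $\mathbb{R}$.

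Next I would substitute $f_{w,k}(x)=e^{2\pi iwx}v_k$ and pull the scalar $e^{2\pi iwx}$ out of the three terms, obtaining
\[ \mathcal{L}^{\sigma_1}f_{w,k}(x)=e^{2\pi iwx}\Bigl(v_k-\tfrac12 e^{-2\pi iw}\,\sigma_1^{-1}v_k-\tfrac12 e^{2\pi iw}\,\sigma_1 v_k\Bigr). \]
Then I invoke the eigen-relations $\sigma_1 v_k=\lambda_k v_k$ and $\sigma_1^{-1}v_k=\lambda_k^{-1}v_k$ (the latter being legitimate because $\sigma_1$ is invertible, so its eigenvectors are eigenvectors of $\sigma_1^{-1}$ with reciprocal eigenvalue; note $\lambda_k\neq 0$ since $\sigma_1\in O(d)$). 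The bracket collapses to $\bigl(1-\tfrac12\lambda_k^{-1}e^{-2\pi iw}-\tfrac12\lambda_k e^{2\pi iw}\bigr)v_k$, whence $\mathcal{L}^{\sigma_1}f_{w,k}=\bigl(1-\tfrac12\lambda_k^{-1}e^{-2\pi iw}-\tfrac12\lambda_k e^{2\pi iw}\bigr)f_{w,k}$, which is exactly the asserted statement.

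There is no serious obstacle in this argument; the only two points worth a sentence of care are the following. Since $f_{w,k}$ is not square-summable, ``eigenfunction'' must be understood in the pointwise sense of the difference operator $\mathcal{L}^{\sigma_1}$, which is meaningful here precisely because, as already observed before Theorem~\ref{Zexpre}, the infinite matrices involved have only finitely many nonzero entries in each row and column, so no associativity issue arises. And one must work over $\mathbb{C}^d$: a real matrix $\sigma_1\in O(d)$ typically has complex eigenvalues $\lambda_k$ and eigenvectors $v_k$, consistent with the complex eigenfunctions already used in Section~2. As a sanity check I would note that $|\lambda_k|=1$ forces $\lambda_k^{-1}=\overline{\lambda_k}$, so writing $\lambda_k=e^{i\theta_k}$ the eigenvalue equals $1-\cos(\theta_k+2\pi w)$, which indeed lies in $[0,2]$ in agreement with the general spectral bound for $\mathcal{L}^{\sigma}$ established in Section~2.
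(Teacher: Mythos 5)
Your proposal is correct and is essentially the paper's own proof: both apply the pointwise action of $\mathcal{L}^{\sigma_1}$ to $f_{w,k}$, use $\sigma_1 v_k=\lambda_k v_k$ and $\sigma_1^{-1}v_k=\lambda_k^{-1}v_k$, and factor out the scalar $e^{2\pi iwx}$. The extra remarks on working over $\mathbb{C}^d$ and on the pointwise sense of ``eigenfunction'' are sensible but do not change the argument.
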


\begin{proof}
	Since $\sigma_1 v_k=\lambda_k v_k$ and $(\sigma_1)^{-1} v_k=(\lambda_k)^{-1} v_k$ , we have
	\begin{equation*}
		\begin{aligned}
			&\mathcal{L}^{\sigma_1}f_{w,k}(x)
			=e^{2\pi iwx }v_k-\frac{1}{2}\left[\sigma_1^{-1} e^{2\pi iw(x-1)}v_k
			+ \sigma_1 e^{2\pi iw(x+1)} v_k\right]\\
			&=\left[e^{2\pi iwx }-\frac{1}{2}\left(\lambda_k^{-1} e^{2\pi iw(x-1)}+ \lambda_k e^{2\pi iw(x+1)}\right)\right]v_k\\
			&=\left(1-\frac{1}{2}\lambda_k^{-1} e^{-2\pi iw }-\frac{1}{2}\lambda_k e^{2\pi iw }\right)e^{2\pi iwx }v_k\\
			&=\left(1-\frac{1}{2}\lambda_k^{-1} e^{-2\pi iw }-\frac{1}{2}\lambda_k e^{2\pi iw }\right) f_{w,k}(x)
		\end{aligned}
	\end{equation*}
\end{proof}
	
	\begin{lemma}\label{eigen_F}
		Suppose $(\mathbb{Z}^n,\hat{\sigma})$ is a connection graph where $\hat{\sigma}=\sigma_1\otimes\cdots\otimes\sigma_n$ and $\{\sigma_j:E(\mathbb{Z})\rightarrow O(d_j)\}_{j=1}^n$ are constant connections. Let $(\lambda_{k_j}^{(j)},v_{k_j}^{(j)})_{k_j=1}^{d_j}$  be the orthonormal eigensystem of $\sigma_j$.
		
		Then $\forall w\in \mathbb{R}^n,1\le k_j\le d_j$, $F_w^{k_1,k_2,\cdots k_n}:=exp(2\pi i\langle w,x\rangle)v^{(1)}_{k_1}\otimes v^{(2)}_{k_2}\otimes\cdots \otimes v^{(n)}_{k_n}$ is an eigenfunction of $ \mathcal{L}^{\hat{\sigma}}$ on $(\mathbb{Z}^n,\hat{\sigma})$ .
	\end{lemma}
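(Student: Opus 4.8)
The plan is to reduce the $n$-dimensional statement to the one-variable lemma via the Kronecker-sum decomposition of the normalized connection Laplacian. First I would record that each factor $(\mathbb{Z},\sigma_j)$ (all edge weights $1$) is $2$-regular with simple weight, so Theorem \ref{n-composition} applies and gives $\mathcal{L}^{\hat\sigma}=\left(\frac1n\mathcal{L}^{\sigma_1}\right)\oplus\cdots\oplus\left(\frac1n\mathcal{L}^{\sigma_n}\right)$, exactly as in the derivation of $H_t^{\mathbb{Z}^n,\hat\sigma}$. Next I would observe that $F_w^{k_1,\ldots,k_n}$ is a tensor product of the one-dimensional functions supplied by the preceding lemma: writing $f_{w_j,k_j}(x_j)=e^{2\pi i w_j x_j}v_{k_j}^{(j)}$ on the $j$-th copy of $\mathbb{Z}$, the identity $\exp(2\pi i\langle w,x\rangle)=\prod_{j=1}^{n}e^{2\pi i w_j x_j}$ shows $F_w^{k_1,\ldots,k_n}=f_{w_1,k_1}\otimes f_{w_2,k_2}\otimes\cdots\otimes f_{w_n,k_n}$ in the sense of the basis functions introduced before Lemma \ref{lemma1}.

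Then I would invoke the elementary fact that if $A_j u_j=\mu_j u_j$ for each $j$, then $(A_1\oplus\cdots\oplus A_n)(u_1\otimes\cdots\otimes u_n)=(\mu_1+\cdots+\mu_n)\,(u_1\otimes\cdots\otimes u_n)$, which is immediate from the definition of the Kronecker sum together with the mixed-product property $(A\otimes B)(u\otimes v)=Au\otimes Bv$. Since the preceding lemma is precisely the $n=1$ case and yields $\mathcal{L}^{\sigma_j}f_{w_j,k_j}=\left(1-\frac12(\lambda_{k_j}^{(j)})^{-1}e^{-2\pi i w_j}-\frac12\lambda_{k_j}^{(j)}e^{2\pi i w_j}\right)f_{w_j,k_j}$, combining these gives that $F_w^{k_1,\ldots,k_n}$ is an eigenfunction of $\mathcal{L}^{\hat\sigma}$ with eigenvalue $\frac1n\sum_{j=1}^{n}\left(1-\frac12(\lambda_{k_j}^{(j)})^{-1}e^{-2\pi i w_j}-\frac12\lambda_{k_j}^{(j)}e^{2\pi i w_j}\right)$, which is the assertion.

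The only genuine point needing care is that $\mathbb{Z}^n$ is infinite, so $\mathcal{L}^{\hat\sigma}$ and the $\mathcal{L}^{\sigma_j}$ are infinite matrices and the Kronecker-sum/product bookkeeping above is formal; as elsewhere in the paper this is harmless because each operator acts by a local formula with finitely many nonzero terms at each vertex, but to sidestep it entirely I would instead give the direct verification, mirroring the proof of the preceding lemma. At $x=(x_1,\ldots,x_n)$ the $2n$ neighbours are $x\pm e_j$; for a constant connection $\hat\sigma_{x,x\pm e_j}=I\otimes\cdots\otimes\sigma_j^{\pm1}\otimes\cdots\otimes I$, and since $F_w^{k_1,\ldots,k_n}(x\pm e_j)=e^{\pm2\pi i w_j}F_w^{k_1,\ldots,k_n}(x)$ and $\sigma_j^{\pm1}v_{k_j}^{(j)}=(\lambda_{k_j}^{(j)})^{\pm1}v_{k_j}^{(j)}$, one gets $\hat\sigma_{x,x\pm e_j}F_w^{k_1,\ldots,k_n}(x\pm e_j)=(\lambda_{k_j}^{(j)})^{\pm1}e^{\pm2\pi i w_j}F_w^{k_1,\ldots,k_n}(x)$; substituting into $\mathcal{L}^{\hat\sigma}F(x)=F(x)-\frac1{2n}\sum_{x'\sim x}\hat\sigma_{xx'}F(x')$ produces the same eigenvalue directly. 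This computation is routine, so there is no substantial obstacle here — the main thing to get right is simply the identification of $F_w^{k_1,\ldots,k_n}$ as a tensor product and the resulting additivity of eigenvalues across the Kronecker sum.
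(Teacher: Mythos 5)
Your proposal is correct and follows essentially the same route as the paper: identify $F_w^{k_1,\ldots,k_n}$ as the tensor product $f_{w_1,k_1}^{(1)}\otimes\cdots\otimes f_{w_n,k_n}^{(n)}$ of the one-dimensional eigenfunctions from the preceding lemma, and apply the Kronecker-sum decomposition $\mathcal{L}^{\hat\sigma}=\frac1n\mathcal{L}^{\sigma_1}\oplus\cdots\oplus\frac1n\mathcal{L}^{\sigma_n}$ so that the eigenvalues add. The direct local verification you add at the end is a reasonable extra safeguard but is not needed beyond what the paper does.
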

	
	\begin{proof}
		$\forall w_j \in \mathbb{R}, 1\le k_j\le d_j$, let $f_{w_j,k_j}^{(j)}(x_j)=exp(2\pi iw_jx_j )v^{(j)}_{k_j}$.
		
		Then  $F_w^{k_1,k_2,\cdots k_n}=f_{w_1,k_1}^{(1)}\otimes \cdots \otimes f_{w_n,k_n}^{(n)}$. From the above lemma, we know $f_{w_j,k_j}^{(j)} $ is an eigenfunction of $\mathcal{L}^{\sigma_j}$. Since $\mathcal{L}^{\hat{\sigma}}=\frac{1}{n}\mathcal{L}^{\sigma_1}\oplus \cdots \oplus \frac{1}{n}\mathcal{L}^{\sigma_n}$, we have
		\begin{equation*}
			\begin{aligned}
				&\mathcal{L}^{\hat{\sigma}}f_{w_1,k_1}^{(1)}\otimes \cdots \otimes f_{w_n,k_n}^{(n)}\\
				&=\sum_{j=1}^{n}f_{w_1,k_1}^{(1)}\otimes\cdots \otimes \left( \frac{1}{n}\mathcal{L}^{\sigma_j}f_{w_j,k_j}^{(j)}\right)\otimes\cdots \otimes f_{w_n,k_n}^{(n)}\\
				&=\frac{1}{n}\sum_{j=1}^{n}\left(1-\frac{1}{2}(\lambda_{k_j}^{(j)})^{-1} exp(-2\pi iw_j)-\frac{1}{2}\lambda^{(j)}_{k_j} exp(2\pi iw_j)\right)f_{w_1,k_1}^{(1)}\otimes \cdots \otimes f_{w_n,k_n}^{(n)}\\
				&=\frac{1}{n}\sum_{j=1}^{n}\left(1-\frac{1}{2}(\lambda_{k_j}^{(j)})^{-1} exp(-2\pi iw_j)-\frac{1}{2}\lambda^{(j)}_{k_j} exp(2\pi iw_j)\right)F_w^{k_1,k_2,\cdots k_n}
			\end{aligned}
		\end{equation*}
	\end{proof}
	
	Obviously, the constant product connection $\hat{\sigma}$ is $M\mathbb{Z}^n-$proper. Therefore, the connection discrete torus $(\mathbb{Z}^n/M\mathbb{Z}^n,\hat{\sigma}^{Q_{M\mathbb{Z}^n}})$ is well defined. The next lemma shows the eigensystem of its connection Laplacian $\mathcal{L}^{\hat{\sigma}^{Q_{M\mathbb{Z}^n}}}$. 
	
	\begin{lemma}
		The orthonormal system of $\mathcal{L}^{\hat{\sigma}^{Q_{M\mathbb{Z}^n}}}$ is
		$$
		\begin{aligned}
			&\left\{\left(\frac{1}{n}\sum_{j=1}^{n}\left(1-\frac{1}{2}\overline{\lambda_{k_j}^{(j)}} exp(-2\pi iw_j)-\frac{1}{2}\lambda_{k_j}^{(j)} exp(2\pi iw_j)\right),\frac{1}{\sqrt{det M}}F_w^{k_1,\cdots,k_n}\right)\right\} \\
			& where\quad w\in(M^*)^{-1}\mathbb{Z}^n /\mathbb{Z}^n,1\le k_1\le d_1,\cdots,1\le k_n\le d_n
		\end{aligned}
		$$
	\end{lemma}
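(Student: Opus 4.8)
The plan is to verify the three claims that the stated system is \emph{(i)} a family of eigenfunctions of $\mathcal{L}^{\hat{\sigma}^{Q_{M\mathbb{Z}^n}}}$, \emph{(ii)} orthonormal, and \emph{(iii)} complete, i.e.\ of the right cardinality $\det M \cdot d_1\cdots d_n$. For \emph{(i)}, I would first observe that each $F_w^{k_1,\dots,k_n}$ with $w\in (M^*)^{-1}\mathbb{Z}^n/\mathbb{Z}^n$ is $M\mathbb{Z}^n$-periodic on $\mathbb{Z}^n$: for $a\in M\mathbb{Z}^n$ one has $\langle w, x+a\rangle - \langle w,x\rangle = \langle w, a\rangle = \langle (M^*)^{-1}m, Mb\rangle$ for some $m,b\in\mathbb{Z}^n$, and since $M^*$ is the transpose (adjugate, up to $\det M$), $\langle (M^*)^{-1}m, Mb\rangle = \langle m, b\rangle \in \mathbb{Z}$, so $e^{2\pi i\langle w,x+a\rangle}=e^{2\pi i\langle w,x\rangle}$. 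Hence $F_w^{k_1,\dots,k_n}$ descends to a well-defined function on $\mathbb{Z}^n/M\mathbb{Z}^n$. By Lemma~\ref{eigen_F}, $F_w^{k_1,\dots,k_n}$ is an eigenfunction of $\mathcal{L}^{\hat{\sigma}}$ on $(\mathbb{Z}^n,\hat{\sigma})$ with the displayed eigenvalue (noting $(\lambda_{k_j}^{(j)})^{-1}=\overline{\lambda_{k_j}^{(j)}}$ since $\sigma_j$ is orthogonal, so its eigenvalues lie on the unit circle). Then Lemma~\ref{solutionlemma}, applied to this $M\mathbb{Z}^n$-periodic eigenfunction, shows it is an eigenfunction of $\mathcal{L}^{\hat{\sigma}^{Q_{M\mathbb{Z}^n}}}$ with the same eigenvalue.

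For \emph{(ii)}, orthonormality splits into the $v$-part and the exponential part. The vectors $v^{(1)}_{k_1}\otimes\cdots\otimes v^{(n)}_{k_n}$ are orthonormal in $\mathbb{C}^{d_1\cdots d_n}$ because each $\{v^{(j)}_{k_j}\}_{k_j}$ is orthonormal and the Kronecker product preserves inner products. For the exponentials, on the finite set $\mathbb{Z}^n/M\mathbb{Z}^n$ (a fundamental domain for $M\mathbb{Z}^n$) the standard character orthogonality gives $\sum_{[x]} e^{2\pi i\langle w-w', x\rangle} = \det M$ if $w\equiv w' \pmod{\mathbb{Z}^n}$ and $0$ otherwise, as $w$ ranges over $(M^*)^{-1}\mathbb{Z}^n/\mathbb{Z}^n$; this is exactly why the normalization constant is $1/\sqrt{\det M}$. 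Combining the two parts, the indicated functions form an orthonormal set with respect to the $\ell^2$ inner product on $C((\mathbb{Z}^n/M\mathbb{Z}^n,\hat{\sigma}^{Q_{M\mathbb{Z}^n}}),\mathbb{C}^{d_1\cdots d_n})$.

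For \emph{(iii)}, the index $w$ runs over $(M^*)^{-1}\mathbb{Z}^n/\mathbb{Z}^n$, which has cardinality $|\det M^*| = |\det M|$, and the indices $(k_1,\dots,k_n)$ run over $\prod_j\{1,\dots,d_j\}$, giving $d_1\cdots d_n$ choices; the total is $\det M \cdot d_1\cdots d_n$, which equals $\dim C((\mathbb{Z}^n/M\mathbb{Z}^n,\hat{\sigma}^{Q_{M\mathbb{Z}^n}}),\mathbb{C}^{d_1\cdots d_n}) = |V(\mathbb{Z}^n/M\mathbb{Z}^n)| \cdot (d_1\cdots d_n)$. An orthonormal set of full dimension is automatically a basis, so the system is complete. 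I expect the main obstacle to be the bookkeeping around the dual lattice: one must pin down the precise relationship $M^* = (\det M)\,M^{-1}$ (the adjugate) so that the pairing $\langle (M^*)^{-1}\mathbb{Z}^n, M\mathbb{Z}^n\rangle \subseteq \mathbb{Z}$ is exactly integral and the count $|(M^*)^{-1}\mathbb{Z}^n/\mathbb{Z}^n| = \det M$ comes out right; once the lattice duality is set up correctly, the eigenfunction property is immediate from the two preceding lemmas and orthonormality is routine character theory.
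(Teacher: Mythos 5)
Your proposal is correct and follows essentially the same route as the paper: $M\mathbb{Z}^n$-periodicity of $F_w^{k_1,\cdots,k_n}$ for $w\in(M^*)^{-1}\mathbb{Z}^n/\mathbb{Z}^n$ (which the paper cites from \cite[Lemma 3.1]{grigor2022discrete} rather than recomputing), the eigenfunction property via Lemma \ref{eigen_F} and Lemma \ref{solutionlemma}, and orthonormality from the tensor-factor structure together with $||F_w^{k_1,\cdots,k_n}||_{l^2}=\sqrt{det M}$; your explicit completeness count $det M\cdot d_1\cdots d_n$ is a small addition the paper leaves implicit. One caution: your pairing computation $\langle (M^*)^{-1}m,Mb\rangle=\langle m,b\rangle\in\mathbb{Z}$ requires $M^*$ to be the transpose $M^T$ (the convention of the cited reference), not the adjugate $(det M)\,M^{-1}$ you mention at the end --- with the adjugate the pairing is not integral and $|(M^*)^{-1}\mathbb{Z}^n/\mathbb{Z}^n|=(det M)^{n-1}\neq det M$ for $n>2$, so the duality bookkeeping you flag as the main obstacle must be resolved in favor of the transpose.
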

	
	\begin{proof}
		Due to \cite[Lemma3.1]{grigor2022discrete}, 
		$F_w^{k_1,k_2,\cdots k_n}$ is $M\mathbb{Z}^n-$periodic if and only if $ w\in (M^*)^{-1}\mathbb{Z}^n /\mathbb{Z}^n$.
		From lemma \ref{solutionlemma} and lemma \ref{eigen_F}, we know $F_w^{k_1,k_2,\cdots k_n}$ is an eigenfunction of $\mathcal{L}^{\hat{\sigma}^{Q_{M\mathbb{Z}^n}}}$ if and only if $ w\in (M^*)^{-1}\mathbb{Z}^n /\mathbb{Z}^n$. 
		
		If $(k_1,\cdots,k_n)\neq(k_1^{'},\cdots,k_n^{'})$, it's obvious that $F_w^{k_1,\cdots,k_n}\perp F_w^{k_1^{'},\cdots,k_n^{'}} $ because $v^{(i)}_{k_i}\perp v^{(i)}_{k_i^{'}} $ when $k_i\neq k_i^{'}$. If $w,w^{'}\in (M^*)^{-1}\mathbb{Z}^n /\mathbb{Z}^n $ and $w\neq w^{'}$, then $e^{2\pi iwx}, e^{2\pi iw^{'}x}$ are eigenfunctions of Laplacian $\mathcal{L}$ on underlying graph $M\mathbb{Z}^n$ and $e^{2\pi iwx}\perp e^{2\pi iw^{'}x} $ for the reason that $0$ is a simple eigenvalue of $\mathcal{L}$ implying that $ e^{2\pi i(w-w^{'})x}$ is orthogonal to $0$'s constant eigenfunction $1$. 
		Combining with $||F_w^{k_1,\cdots,k_n}||_{l^2}=\sqrt{det M}$, we finish the proof.  
	\end{proof}
	
	Let $H_t^{\hat{\sigma}^{Q_{M\mathbb{Z}^n}}}$ be connection heat kernel on $(\mathbb{Z}^n/M\mathbb{Z}^n,\hat{\sigma}^{Q_{M\mathbb{Z}^n}})$. Then we can derive an alternative expression of $H_t^{\hat{\sigma}^{Q_{M\mathbb{Z}^n}}}$.
	
	\begin{theorem}\label{torus 2}
		\begin{equation*}
			\begin{aligned}
				&H_t^{\hat{\sigma}^{Q_{M\mathbb{Z}^n}}}\left(\left(x_1,\cdots,x_n\right),\left(y_1,\cdots,y_n\right)\right)\\
				&=\frac{1}{det M}\sum_{w\in(M^*)^{-1}\mathbb{Z}^n /\mathbb{Z}^n}e^{-t}e^{2\pi i\langle w,x-y\rangle}e^{\frac{t}{n}cos(2\pi w_1)\sigma_1\oplus\frac{t}{n}cos(2\pi w_2)\sigma_2\oplus\cdots\oplus\frac{t}{n}cos(2\pi w_n)\sigma_n }
			\end{aligned}
		\end{equation*}
	\end{theorem}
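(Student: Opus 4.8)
The plan is to read off $H_t^{\hat{\sigma}^{Q_{M\mathbb{Z}^n}}}$ directly from the characteristic representation established in Section~2, using the complete orthonormal eigensystem of $\mathcal{L}^{\hat{\sigma}^{Q_{M\mathbb{Z}^n}}}$ supplied by the previous lemma. Abbreviate $F_w^{\vec{k}}:=F_w^{k_1,\dots,k_n}=e^{2\pi i\langle w,x\rangle}v^{(1)}_{k_1}\otimes\cdots\otimes v^{(n)}_{k_n}$ and let $\mu_{w,\vec{k}}=\frac1n\sum_{j=1}^n\left(1-\frac12\overline{\lambda^{(j)}_{k_j}}e^{-2\pi iw_j}-\frac12\lambda^{(j)}_{k_j}e^{2\pi iw_j}\right)$ be the associated eigenvalue. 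Since $\{\frac1{\sqrt{\det M}}F_w^{\vec{k}}\}$ is an orthonormal basis (there are $\det M\cdot d_1\cdots d_n$ of them, matching the dimension), the spectral formula, adapted to a complex basis with $\overline{(\cdot)}^T$ the conjugate transpose, gives
\[
H_t^{\hat{\sigma}^{Q_{M\mathbb{Z}^n}}}(x,y)=\frac1{\det M}\sum_{w\in(M^*)^{-1}\mathbb{Z}^n/\mathbb{Z}^n}\sum_{k_1=1}^{d_1}\cdots\sum_{k_n=1}^{d_n}e^{-t\mu_{w,\vec{k}}}\,F_w^{\vec{k}}(x)\,\overline{F_w^{\vec{k}}(y)}^{T}.
\]

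The next step is to separate the $n$ coordinate directions inside each summand. The $n$ copies of ``$1$'' inside $\mu_{w,\vec{k}}$ contribute a global factor $e^{-t}$, so $e^{-t\mu_{w,\vec{k}}}=e^{-t}\prod_{j=1}^n\exp\!\left(\frac{t}{2n}\bigl(\overline{\lambda^{(j)}_{k_j}}e^{-2\pi iw_j}+\lambda^{(j)}_{k_j}e^{2\pi iw_j}\bigr)\right)$; and by the mixed-product property of the Kronecker product, $F_w^{\vec{k}}(x)\overline{F_w^{\vec{k}}(y)}^T=e^{2\pi i\langle w,x-y\rangle}\bigl(v^{(1)}_{k_1}\overline{v^{(1)}_{k_1}}^{T}\bigr)\otimes\cdots\otimes\bigl(v^{(n)}_{k_n}\overline{v^{(n)}_{k_n}}^{T}\bigr)$. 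Summing over $k_1,\dots,k_n$ independently and invoking bilinearity of $\otimes$, the inner multiple sum becomes the Kronecker product $\bigotimes_{j=1}^n\Bigl(\sum_{k_j=1}^{d_j}\exp\!\bigl(\tfrac{t}{2n}(\overline{\lambda^{(j)}_{k_j}}e^{-2\pi iw_j}+\lambda^{(j)}_{k_j}e^{2\pi iw_j})\bigr)v^{(j)}_{k_j}\overline{v^{(j)}_{k_j}}^{T}\Bigr)$.

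It then remains to recognize each of these $n$ factors as a matrix exponential and to reassemble them. Because $\{(\lambda^{(j)}_{k_j},v^{(j)}_{k_j})\}_{k_j}$ is an orthonormal eigensystem of the constant connection matrix $\sigma_j$, the spectral theorem gives $\sigma_j=\sum_{k_j}\lambda^{(j)}_{k_j}v^{(j)}_{k_j}\overline{v^{(j)}_{k_j}}^{T}$; orthogonality forces $|\lambda^{(j)}_{k_j}|=1$, hence $\overline{\lambda^{(j)}_{k_j}}=(\lambda^{(j)}_{k_j})^{-1}$ and $\sigma_j^{-1}=\sigma_j^{T}=\sum_{k_j}\overline{\lambda^{(j)}_{k_j}}v^{(j)}_{k_j}\overline{v^{(j)}_{k_j}}^{T}$. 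Therefore the $j$-th factor equals $\exp\!\bigl(\tfrac{t}{2n}(e^{-2\pi iw_j}\sigma_j^{-1}+e^{2\pi iw_j}\sigma_j)\bigr)$, which for a constant connection --- where $\sigma_j$ is a symmetric orthogonal matrix, so $\sigma_j^{-1}=\sigma_j$ --- simplifies to $\exp\!\bigl(\tfrac{t}{n}\cos(2\pi w_j)\sigma_j\bigr)$. Finally, using $\bigotimes_{j=1}^n e^{B_j}=e^{B_1\oplus\cdots\oplus B_n}$ (the identity already exploited in Section~3), the inner sum becomes $e^{2\pi i\langle w,x-y\rangle}\exp\bigl(\tfrac{t}{n}\cos(2\pi w_1)\sigma_1\oplus\cdots\oplus\tfrac{t}{n}\cos(2\pi w_n)\sigma_n\bigr)$, and substituting back yields the claimed formula.

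I expect the only genuinely delicate points to be keeping careful track of the conjugate transpose when pushing the outer products through the $k$-sums (so that the Kronecker-product identities apply verbatim), and the identification $e^{-2\pi iw_j}\sigma_j^{-1}+e^{2\pi iw_j}\sigma_j=2\cos(2\pi w_j)\sigma_j$, which uses that a constant connection matrix is an involution; the remaining manipulations --- factoring the eigenvalue, distributing $\otimes$ over sums, and converting a Kronecker product of exponentials into an exponential of a Kronecker sum --- are routine and were all rehearsed in earlier sections.
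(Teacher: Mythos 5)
Your proof follows essentially the same route as the paper's: insert the orthonormal eigensystem from the preceding lemma into the spectral representation, factor the $e^{-t}$ out of $e^{-t\mu_{w,\vec k}}$, push the $k$-sums through the Kronecker product via bilinearity and the mixed-product property, recognize each of the $n$ factors as $e^{\frac{t}{n}\cos(2\pi w_j)\sigma_j}$, and reassemble with $\bigotimes_{j}e^{B_j}=e^{B_1\oplus\cdots\oplus B_n}$. The one point where you add something is worth keeping: you state explicitly that the identification $e^{-2\pi i w_j}\sigma_j^{-1}+e^{2\pi i w_j}\sigma_j=2\cos(2\pi w_j)\sigma_j$ requires $\sigma_j$ to be symmetric (equivalently, all $\lambda^{(j)}_{k_j}$ real) --- a hypothesis that the paper's definition of a constant connection does not actually impose, and that the paper's own proof uses silently when it asserts that $\frac{e^{2\pi i w_j}\lambda^{(j)}_{k_j}+\overline{e^{2\pi i w_j}\lambda^{(j)}_{k_j}}}{2}$ is an eigenvalue of $\cos(2\pi w_j)\sigma_j$ with eigenvector $v^{(j)}_{k_j}$, which is true only when $\lambda^{(j)}_{k_j}$ is real.
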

	\begin{proof}
		Putting the orthonormal system into the characteristic representation of connection  heat kernel, we have
		
		\begin{align*}
			&H_t^{\hat{\sigma}^{Q_{M\mathbb{Z}^n}}}\left(\left(x_1,\cdots,x_n\right),\left(y_1,\cdots,y_n\right)\right)\\
			&=\frac{1}{det M}\sum_{w\in(M^*)^{-1}\mathbb{Z}^n /\mathbb{Z}^n}\sum_{1\le k_1\le d_1,\cdots,1\le k_n\le d_j }\\
			&\exp\left(\frac{-t}{n}\sum_{j=1}^{n}\left(1-\frac{1}{2}\overline{\lambda_{k_j}^{(j)}} exp(-2\pi iw_j)-\frac{1}{2}\lambda_{k_j}^{(j)} exp(2\pi iw_j)\right)\right)\\ &\left(f_{w_1,k_1}^{(1)}(x_1)\overline{\left(f_{w_1,k_1}^{(1)}(y_1)\right)}^T\right)\otimes \cdots \otimes \left(f_{w_n,k_n}^{(n)}(x_n)\overline{\left(f_{w_n,k_n}^{(n)}(y_n)\right)}^T\right)\\
			&=\frac{1}{det M}\sum_{w\in(M^*)^{-1}\mathbb{Z}^n /\mathbb{Z}^n}e^{-t}\\
			&\left(\sum_{1\le k_1\le d_1}e^{\frac{t}{n}\frac{exp(2\pi i w_1)\lambda_{k_1}^{(1)}+\overline{exp(2\pi i w_1)\lambda_{k_1}^{(1)}}}{2}}f_{w_1,k_1}^{(1)}(x_1)\overline{\left(f_{w_1,k_1}^{(1)}(y_1)\right)}^T\right)\\
			&\otimes\cdots\otimes\left(\sum_{1\le k_n\le d_n}e^{\frac{t}{n}\frac{exp(2\pi i w_n)\lambda_{k_n}^{(n)}+\overline{exp(2\pi i w_n)\lambda_{k_n}^{(n)}}}{2}}f_{w_n,k_n}^{(n)}(x_1)\overline{\left(f_{w_n,k_n}^{(n)}(y_1)\right)}^T\right)\\
			&=\frac{1}{det M}\sum_{w\in(M^*)^{-1}\mathbb{Z}^n /\mathbb{Z}^n}e^{-t}\\
			&\left(\sum_{1\le k_1\le d_1}e^{2\pi i w_1(x_1-y_1)}e^{\frac{t}{n}\frac{exp(2\pi i w_1)\lambda_{k_1}^{(1)}+\overline{exp(2\pi i w_1)\lambda_{k_1}^{(1)}}}{2}}v_{k_1}^{(1)}\overline{\left(v_{k_1}^{(1)}\right)}^T\right)\\
			&\otimes\cdots\otimes\left(\sum_{1\le k_n\le d_n}e^{2\pi i w_n(x_n-y_n)}e^{\frac{t}{n}\frac{exp(2\pi i w_n)\lambda_{k_n}^{(n)}+\overline{exp(2\pi i w_n)\lambda_{k_n}^{(n)}}}{2}}v_{k_n}^{(n)}\overline{\left(v_{k_n}^{(n)}\right)}^T\right)\\
		\end{align*}
		
		For $\forall j=1,2,\cdots,n$, since $\left\{ (\lambda_{k_j}^{(j)},v_{k_j}^{(j)})\right\}_{k_j=1}^{d_j}$ is the orthonormal system of $\sigma_j$, 
		
		 $\left\{ (\frac{exp(2\pi i w_j)\lambda_{k_j}^{(j)}+\overline{exp(2\pi i w_j)\lambda_{k_j}^{(j)}}}{2},v_{k_j}^{(j)})\right\}_{k_j=1}^{d_j}$ is the orthonormal system of $cos(2\pi w_j)\sigma_j$.
		Therefore, for $\forall j=1,2,\cdots,n$
		\[ e^{\frac{t}{n}cos(2\pi w_j)\sigma_j}=\sum_{1\le k_j\le d_j}e^{\frac{t}{n}\frac{exp(2\pi i w_j)\lambda_{k_j}^{(j)}+\overline{exp(2\pi i w_j)\lambda_{k_j}^{(j)}}}{2}}v_{k_j}^{(j)}\overline{v_{k_j}^{(j)}}^T \]
		Combining the above two equations, we have 
		\begin{align*}
			&H_t^{\sigma^{\mathbb{Z}^n/M\mathbb{Z}^n}}\left(\left(x_1,\cdots,x_n\right),\left(y_1,\cdots,y_n\right)\right)\\
			&=\frac{1}{det M}\sum_{w\in(M^*)^{-1}\mathbb{Z}^n /\mathbb{Z}^n}e^{-t}e^{2\pi i w_1(x_1-y_1)}e^{2\pi i w_2(x_2-y_2)}\cdots e^{2\pi i w_n(x_n-y_n)}\\
			&e^{\frac{t}{n}cos(2\pi w_1)\sigma_1}\otimes\cdots\otimes e^{\frac{t}{n}cos(2\pi w_n)\sigma_n}\\
			&=\frac{1}{det M}\sum_{w\in(M^*)^{-1}\mathbb{Z}^n /\mathbb{Z}^n}e^{-t}e^{2\pi i\langle w,x-y\rangle}e^{\frac{t}{n}cos(2\pi w_1)\sigma_1\oplus\frac{t}{n}cos(2\pi w_2)\sigma_2\oplus\cdots\oplus\frac{t}{n}cos(2\pi w_n)\sigma_n }
		\end{align*}	 
	\end{proof}

	As a result of theorem \ref{torus 1} and the theorem \ref{torus 2}, we get the following equality:
	
	\begin{theorem}\label{mainthm}
		Suppose $M$ is an integer $n\times n$ matrix and $detM>1$ and $\sigma_1,\cdots,\sigma_n$ are arbitrary orthogonal matrices, then
		\begin{tiny}
			\begin{equation*}
				\begin{aligned}
					&\sum_{a\in M\mathbb{Z}^n}\prod_{i=1}^{n}\left((-1)^{|y_i+a_i-x_i|} \sum_{k\ge 0}\frac{C_{2k}^{k+|y_i+a_i-x_i|}}{k!} (-\frac{t}{2n})^k\right) \sigma^{y_1+a_1-x_1}_1\otimes\cdots\otimes \sigma^{y_n+a_n-x_n}_n\\
					&=\frac{1}{det M}\sum_{w\in(M^*)^{-1}\mathbb{Z}^n /\mathbb{Z}^n}e^{2\pi i\langle w,x-y\rangle}e^{-t}e^{\frac{t}{n}cos(2\pi w_1)\sigma_1\oplus\frac{t}{n}cos(2\pi w_2)\sigma_2\oplus\cdots\oplus\frac{t}{n}cos(2\pi w_n)\sigma_n }
				\end{aligned}
			\end{equation*} 
		\end{tiny}
	\end{theorem}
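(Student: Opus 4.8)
The plan is to deduce the identity simply by equating the two expressions for the connection heat kernel $H_t^{\hat{\sigma}^{Q_{M\mathbb{Z}^n}}}([x],[y])$ on the connection discrete torus that have already been established: the lattice-sum formula of Theorem \ref{torus 1} and the spectral formula of Theorem \ref{torus 2}. Both compute the same $d_1\cdots d_n \times d_1\cdots d_n$ block of the same operator $e^{-t\mathcal{L}^{\hat{\sigma}^{Q_{M\mathbb{Z}^n}}}}$, so they must agree; writing out this agreement is precisely the claimed matrix equation.

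First I would set up the connection. Given arbitrary orthogonal matrices $\sigma_1,\dots,\sigma_n$, put on the $i$-th copy of $\mathbb{Z}$ the constant connection $\sigma^{(i)}$ with $\sigma^{(i)}(E(\mathbb{Z}))=\{\sigma_i,\sigma_i^{-1}\}$, and let $\hat{\sigma}=\sigma^{(1)}\otimes\cdots\otimes\sigma^{(n)}$ be the associated constant product connection. As observed in the discussion preceding Theorem \ref{torus 2}, $\hat{\sigma}$ is automatically $M\mathbb{Z}^n$-proper, so $(\mathbb{Z}^n/M\mathbb{Z}^n,\hat{\sigma}^{Q_{M\mathbb{Z}^n}})$ is a well-defined connection discrete torus and both Theorem \ref{torus 1} and Theorem \ref{torus 2} are applicable to it.

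Next I would reconcile the signature factor appearing in Theorem \ref{torus 1} with the matrix powers appearing on the left-hand side of the statement. In $(\mathbb{Z},\sigma^{(i)})$ the unique path from $x_i$ to $y_i+a_i$ consists of $|y_i+a_i-x_i|$ consecutive unit steps, each one forward contributing $\sigma_i$ to the signature and each one backward contributing $\sigma_i^{-1}$; hence $\sigma^{(i)}_{P_{x_i\rightarrow y_i+a_i}}=\sigma_i^{\,y_i+a_i-x_i}$, with negative exponents interpreted as powers of $\sigma_i^{-1}$. Substituting this into the formula of Theorem \ref{torus 1} replaces the tail $\sigma^{(1)}_{P_{x_1\rightarrow y_1+a_1}}\otimes\cdots\otimes\sigma^{(n)}_{P_{x_n\rightarrow y_n+a_n}}$ by $\sigma_1^{\,y_1+a_1-x_1}\otimes\cdots\otimes\sigma_n^{\,y_n+a_n-x_n}$, which is exactly the left-hand side of the asserted equation (the conventions $C_{2k}^{k+m}=0$ when $2k<k+m$ and the exponents $(-1)^{|y_i+a_i-x_i|}$ transfer verbatim).

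Finally, the right-hand side is literally the expression produced by Theorem \ref{torus 2}, up to commuting the scalar $e^{-t}$ past $e^{2\pi i\langle w,x-y\rangle}$. Equating the two outputs, and noting that the resulting identity does not depend on which coset representatives $x=(x_1,\dots,x_n)$, $y=(y_1,\dots,y_n)$ are chosen, finishes the proof. There is no analytic difficulty here; the result is a corollary of the two theorems, and the only step requiring care is the bookkeeping of the previous paragraph — verifying that a signature in a constant connection is the appropriate signed integer power of the orthogonal matrix and that the binomial-coefficient conventions match on both sides.
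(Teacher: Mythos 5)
Your proposal is correct and follows exactly the paper's route: the theorem is obtained by equating the lattice-sum expression of Theorem \ref{torus 1} with the spectral expression of Theorem \ref{torus 2} for the same connection heat kernel on the constant-connection discrete torus. Your explicit bookkeeping that the signature $\sigma^{(i)}_{P_{x_i\rightarrow y_i+a_i}}$ equals the signed power $\sigma_i^{y_i+a_i-x_i}$ is in fact more careful than the paper, which leaves this identification implicit.
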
 
	
	\section{The Application of Connection Heat Kernel }
	
	\subsection{Connection Trace Formula on Connection Discrete Torus}
	When $x=y$, Theorem \ref{mainthm} has the following form:
	\begin{equation}\label{trace}
		\begin{aligned}
			&\sum_{a\in M\mathbb{Z}^n}\prod_{i=1}^{n}\left((-1)^{|a_i|} \sum_{k\ge 0}\frac{C_{2k}^{k+|a_i|}}{k!} (-\frac{t}{2n})^k\right) \sigma^{a_1}_1\otimes\cdots\otimes \sigma^{a_n}_n\\
			&=\frac{1}{det M}\sum_{w\in(M^*)^{-1}\mathbb{Z}^n /\mathbb{Z}^n}e^{-t}e^{\frac{t}{n}cos(2\pi w_1)\sigma_1\oplus\frac{t}{n}cos(2\pi w_2)\sigma_2\oplus\cdots\oplus\frac{t}{n}cos(2\pi w_n)\sigma_n }
		\end{aligned}
	\end{equation} 
	In \cite[Thm 9]{chung1997combinatorial} cycle's trace formula is as follows:
	\[ \sum_{j\in \mathbb{Z}}\sum_{k\ge 0}\left((-1)^{mj} \frac{C_{2k}^{k+mj}}{k!} (-\frac{t}{2})^k\right) =\frac{1}{m}\sum_{w=0}^{m-1} exp(-t(1-cos(\frac{2\pi w}{m})))\]
	
	We call the equation (\ref{trace}) connection trace formula on connection discrete torus, which can be regarded as a promotion for trace formula on cycle. 
	
	\textbf{Modified Bessel functions} For integer $x$ and parameter $t\ge 0$, the modified Bessel function is \[ I_x(t)=\frac{1}{\pi}\int_{0}^{\pi}e^{tcos\theta}cos(x\theta)d\theta=\sum_{k=0}^{\infty}\frac{(\frac{t}{2})^{x+2k}}{k!(x+k)!}  \]
	From the property $I_{x-1}(t)+I_{x+1}(t)=2\frac{\partial}{\partial t}I_x(t)$ and $I_x(0)=0$ unless $x=0$, it's easy to check $e^{-t}I_x(t)$ is the solution of heat equation $(\frac{\partial}{\partial t}+\mathcal{L})f(t,x)=0$ on $\mathbb{Z}$ with initial condition $f(0,x)=\delta_0(x)$, suggesting that $e^{-t}I_{y-x}(t)$ is the solution under initial condition $f(0,x)=\delta_y(x)$ . Since $H^{\mathbb{Z}}(x,y)$ is the solution of heat equation under initial condition $f(0,x)=\delta_y(x)$, $H^{\mathbb{Z}}(x,y)=e^{-t}I_{y-x}(t)$. In \cite[Thm4]{chung1997combinatorial}, we know
	$$H^{\mathbb{Z}}(x,y)=(-1)^{|y-x|} \sum_{k\ge 0}\frac{C_{2k}^{k+|y-x|}}{k!} (-\frac{t}{2})^k $$ Therefore, 
	\begin{align*}
		(-1)^{|x|} \sum_{k\ge 0}\frac{C_{2k}^{k+|x|}}{k!}(-\frac{t}{2})^k&=e^{-t}I_{x}(t)\\
		\prod_{i=1}^{n}\left((-1)^{|a_i|} \sum_{k\ge 0}\frac{C_{2k}^{k+|a_i|}}{k!} (-\frac{t}{2n})^k\right)&=\prod_{i=1}^{n}e^{-\frac{t}{n}}I_{a_i}(\frac{t}{n})
	\end{align*}
	Putting the above equation into equation (\ref{trace}), we get
	\begin{align*}
		&\sum_{a\in M\mathbb{Z}^n}\prod_{i=1}^{n}e^{-\frac{t}{n}}I_{a_i}(\frac{t}{n}) \sigma^{a_1}_1\otimes\cdots\otimes \sigma^{a_n}_n\\
		&=\frac{1}{det M}\sum_{w\in(M^*)^{-1}\mathbb{Z}^n /\mathbb{Z}^n}e^{-t}e^{\frac{t}{n}cos(2\pi w_1)\sigma_1\oplus\frac{t}{n}cos(2\pi w_2)\sigma_2\oplus\cdots\oplus\frac{t}{n}cos(2\pi w_n)\sigma_n }
	\end{align*}
	We call the above equality as the connection theta relation, which can be regarded as a promotion for theta relation in \cite[Thm 1]{karlsson2006heat}.
	
	\subsection{Vector Diffusion Distance Based on Connection Heat Kernel}
	
	In processing and analyzing an immense amount of high dimensional data sets, weighted graphs are often used to represent the affinities between data points. Consequently, many dimensionality reduction methods have appeared in the past decade, such as diffusion map\cite{coifman2006diffusion}, locally linear embedding\cite{roweis2000nonlinear} and so on. A new mathematical framework called vector diffusion map(VDM)\cite{singer2012vector} utilizes connection graph to symbolize high data, which assigns every edge of graph not only a weight but also a linear orthogonal transform. Based on connection kernel, VDM defines an embedding of data into a Hilbert space and the distance between data points is called vector diffusion distance. In \cite[Thm 8.2]{singer2012vector}, we find the vector diffusion distance behaves like geodesic distance in asymptotic limit. 
	
	In manifold setup, let $\mathcal{M}$ be a manifold and $\triangledown^2$ is connection Laplacian on $\mathcal{M}$. Assume $\left\{(-\lambda_k,X_k)\right\}_{k=0}^{\infty}$ is the orthonormal eigensystem of $\triangledown^2$, the connection heat kernel \cite{grigor2001heat} is \[ H_t(x,y)=\sum_{n=0}^{\infty} e^{-t\lambda_n}X_n(x)\otimes \overline{X_n(y)} \]
	Define diffusion map $V_t:\mathcal{M}\rightarrow \mathcal{l}^2$ as following:
	\[ V_t:x\rightarrow \left(e^{-\frac{t(\lambda_m+\lambda_n)}{2}} \langle X_n(x), X_m(x)\rangle \right)_{n,m=0}^{\infty}  \]
	$\forall x,y\in \mathcal{M}$, the vector diffusion distance is defined as \[ d_{VDM,t}(x,y):=||V_t(x)-V_t(y)||_{\mathcal{l}^2} \]
	We can see how connection heat kernel is pertinent to vector diffusion distance from the following direct calculation:
	\begin{equation*}
		\begin{aligned}
			||H_t(x,y)||_{HS}^2&=tr[H(x,y)H(x,y)^*]\\
			&=\sum_{n,m=0}^{\infty}e^{-t(\lambda_n+\lambda_m)} \langle X_n(x), X_m(x)\rangle \overline{\langle X_n(x), X_m(x)\rangle}\\
			&=\langle V_t(x),V_t(y) \rangle_{\mathcal{l}^2}
		\end{aligned}
	\end{equation*} 
	\begin{equation*}
		\begin{aligned}
			d_{VDM,t}^2(x,y)&=\langle V_t(x)-V_t(y),V_t(x)-V_t(y)\rangle_{\mathcal{l}^2} \\
			&=\langle V_t(x),V_t(x)\rangle_{\mathcal{l}^2}+\langle V_t(y),V_t(y)\rangle_{\mathcal{l}^2}-2\langle V_t(x),V_t(y)\rangle_{\mathcal{l}^2}\\
			&=||H_t(x,x)||_{HS}^2+||H_t(y,y)||_{HS}^2-2||H_t(x,y)||_{HS}^2
		\end{aligned}
	\end{equation*}
	\bibliographystyle{plain}
	\bibliography{ref2023}
	
\end{document}